\newtheorem{thm}{Theorem}[section]
\newtheorem{prop}[thm]{Proposition}
\newtheorem{lemma}[thm]{Lemma}
\theoremstyle{definition}
\newtheorem{defn}[thm]{Definition}
\newtheorem{ex}[thm]{Example}
\theoremstyle{remark}
\newtheorem{remark}[thm]{Remark}
\numberwithin{equation}{section}
\def\La{\boldsymbol\Lambda}
\def\a{\mathfrak{a}}
\def\b{\mathfrak{b}}
\def\x{\mathbf{x}}
\def\y{\mathbf{y}}
\def\C{\mathbb{C}}
\def\R{\mathbb{R}}
\def\RP{\mathbb{RP}}
\def\a{\mathfrak{a}}
\def\b{\mathfrak{b}}
\def\SO{\mathrm{SO}}
\def\GL{\mathrm{GL}}
\def\Q{\mathcal{Q}}
\def\P{\mathcal{P}}
\def\span{\mathrm{span\,}}
\begin{document}

\title[]{Surfaces in Laguerre Geometry}

\author{Emilio Musso}
\address{(E. Musso) Dipartimento di Scienze Matematiche, Politecnico di Torino,
Corso Duca degli Abruzzi 24, I-10129 Torino, Italy}
\email{emilio.musso@polito.it}

\author{Lorenzo Nicolodi}
\address{(L. Nicolodi) Di\-par\-ti\-men\-to di Scienze Ma\-te\-ma\-ti\-che, Fisiche e Informatiche,
Uni\-ver\-si\-t\`a di Parma, Parco Area delle Scienze 53/A,
I-43124 Parma, Italy}
\email{lorenzo.nicolodi@unipr.it}

\thanks{Authors partially supported by MIUR (Italy) under the PRIN project
\textit{Variet\`a reali e complesse: geometria, topologia e analisi armonica};
and by the GNSAGA of INDAM}

\subjclass[2000]{53A35, 53C42}



\keywords{Laguerre geometry, Lie sphere geometry, surfaces in Laguerre geometry, Legendre immersions,
Laguerre Gauss map, $L$-minimal surfaces, $L$-isothermic surfaces, $L$-applicable surfaces,
Lawson correspondence.}

\begin{abstract}
This exposition
gives an introduction to the theory of surfaces in Laguerre geometry and surveys some results, mostly
obtained by the authors, about three important classes of surfaces in Laguerre geometry, namely $L$-isothermic, 
$L$-minimal, and generalized $L$-minimal surfaces. The quadric model of Lie sphere geometry is adopted for 
Laguerre geometry and the method of moving frames is used throughout. As an example, the Cartan--K\"ahler theorem
for exterior differential systems is applied to study the Cauchy problem for the Pfaffian differential system 
of $L$-minimal surfaces. This is an elaboration of the talks given by the authors at IMPAN, Warsaw, in September 2016. 
The objective was to illustrate, by the subject of Laguerre surface geometry, some of the topics presented
in a series of lectures held at IMPAN by G. R. Jensen on Lie sphere geometry and by B. McKay
on exterior differential systems.

\end{abstract}

\maketitle

\section{Introduction}\label{s:intro}
Laguerre geometry is a classical sphere geometry
that has its origins in the work of E. Laguerre in the mid 19th century and that had been extensively
studied in the 1920s by Blaschke and Thomsen \cite{Blaschke1, Blaschke}.
The study of surfaces in Laguerre geometry is currently still an active area of research
\cite{AGM, li-li-wang, li-wang-mm, MN-REND-RM, MN-TAMS, MN-BOLL, MN-IJM, MN-TOH, MN-MZ, Pa1999, Pa1990} and
several classical topics in Laguerre geometry,
such as
Laguerre minimal surfaces and Laguerre isothermic surfaces and their transformation theory,
have recently received much
attention in the theory of integrable systems \cite{MN-AMB, MN-IJM, Pember, Szer, Rog-Szer},
in discrete differential geometry, and in the applications
to geometric computing and architectural geometry
\cite{Bobenko2006, Bobenko2007, Bobenko2010, Pott1998, Pott2009, Pott2012}.

\vskip0.2cm
The fundamental elements of Laguerre geometry in Euclidean space $\mathbb R^3$
are oriented planes, oriented spheres and points.
The orientation is determined by a field of unit normals
and the basic relation among these elements is that of oriented contact.
Laguerre geometry distinguishes between oriented planes and oriented spheres (including point spheres).
To describe oriented planes, oriented spheres and points in a uniform
way we will use
the quadric model of Lie sphere geometry \cite{Blaschke, Ce, Gary, JMN}.
In this model,
all oriented planes, oriented spheres and point spheres in $\mathbb R^3 \cup\{\infty\}$
are in bijective correspondence with the points on a quadric $\mathcal Q$ in real
projective space $\mathbb{RP}^5$
given by the equation $\langle \a, \a\rangle =0$, where $\langle\,,\rangle$ is
a nondegenerate scalar product of signature $(4,2)$ in $\mathbb{R}^{6}$.
Oriented contact is determined by the
conjugacy relation with respect to $\mathcal Q$.
The 5-dimensional space $\La$ of lines in $\mathcal Q$ which do not meet the
``point at infinity'' $\{\infty\}$ is the
underlying space for Laguerre geometry.
The transformations of Laguerre geometry
are bijective, separately, on the set $\mathcal Q_\Pi$ of oriented planes
and the set $\mathcal Q_\Sigma$ of oriented spheres (including point spheres) in $\mathbb{R}^3$,
fix the ``point at infinity'' and preserve oriented contact.
In the quadric model, the group of Laguerre transformations is a subgroup of the group of Lie sphere
transformations, which leave invariant the space of all oriented spheres, oriented planes
and point spheres in $\mathbb R^3 \cup \{\infty\}$.
The Laguerre group is isomorphic to the 10-dimensional (restricted) Poincar\'e group
\cite{Blaschke, Ce}.

\vskip0.2cm
An immersed surface $x : S \to \mathbb{R}^3$, oriented by a unit normal
field $n:S \to S^{2}$, induces a Legendre immersion
$f=(x,n):S\to \mathbb{R}^3\times S^{2} = T_1\mathbb{R}^3$ into the
unit tangent bundle of $\mathbb{R}^3$, endowed
with the natural contact structure given by the 1-form $\alpha_{(x,n)}=dx\cdot n$.
The space $\mathbb{R}^3\times S^{2}$
can be identified with $\La$ as contact manifold.
%
The {\it Laguerre space} is $\La\cong \mathbb{R}^3\times S^{2}$ viewed as homogeneous space of the Laguerre group.
The Laguerre group acts on the Legendre lifts of the immersions rather than on the
immersions themselves, since it does not act by point-transformations.
The principal aim of Laguerre
geometry is to study the properties of an immersion which are invariant
under the action of the Laguerre group on Legendre immersions.
Locally and up to Laguerre transformation, any
Legendre immersion arises as a Legendre lift.
Thus, the study of
Laguerre geometry of surfaces in $\mathbb{R}^3$ is reduced
to that of Legendre immersions in $\La \cong \mathbb{R}^3\times S^{2}$.
The most important tool to study the Laguerre geometry of surfaces
in $\mathbb{R}^3$
is the Laguerre position vector $\mathfrak s : S \to \mathbb{R}^{6}$,
$\langle \mathfrak s, \mathfrak s\rangle =0$,
which corresponds to the classical {\it middle sphere
congruence},
also known as the {\it Laguerre Gauss map}.
For a given immersion $x : S \to \mathbb{R}^3$, by
pulling back $\langle\,,\,\rangle$ via $\mathfrak s$
we obtain a Laguerre invariant metric on $S$. This metric
is conformal to the third fundamental form $\mathrm{III}= dn\cdot dn$ of the immersion $x$.

\vskip0.2cm

The purpose of this article is to survey some significant results concerning three important
classes of surfaces in Laguerre geometry,
namely, $L$-isothermic, $L$-minimal, and generalized $L$-minimal surfaces.
These are the Laguerre geometric counterparts
of isothermic, Willmore, and generalized Willmore surfaces in M\"obius
geometry (cf. Remark \ref{r:moebius-ge}).
%
Traditionally, see the work of
Blaschke and Thomsen \cite{Blaschke}, Laguerre and M\"obius surface geometries have been
developed in parallel as subgeometries of Lie sphere geometry.

\vskip0.2cm

In Section \ref{s:pre}, we recall the basic facts about Laguerre geometry in Euclidean space
using the quadric model of Lie sphere geometry.

\vskip0.2cm
In Section \ref{s:nondeg-Leg-imm},
we develop the method of moving frames for Legendre immersions in the
Laguerre space $\La$
and show how to construct a canonical frame field for a Legendre immersion
under the hypothesis of nondegeneracy.\footnote{cf. Section \ref{s:nondeg-Leg-imm},
Definition \ref{nondegenerate}.}

\vskip0.2cm
In Section \ref{s:L-isotermic}, we discuss $L$-isothermic surfaces.
A surface in $\R^3$
is {\it $L$-isothermic} if, away from umbilic and parabolic points, it
admits curvature line coordinates that are isothermal (conformal)
with respect to the third fundamental form \cite{Blaschke, MN-BOLL, MN-AMB}.
$L$-isothermic surfaces are invariant under the Laguerre group and
are characterized as the only $L$-applicable
surfaces.\footnote{cf. Section \ref{s:L-isotermic},
Definition \ref{d:applicable}.}
Moreover, the nontrivial family of surfaces that are $L$-applicable
on a given $L$-isothermic surface depends on one parameter. This feature
indicates that $L$-isothermic surfaces constitute an integrable system.
For more details on the aspects of $L$-isothermic surfaces related to
the theory of integrable system and for the study of their rich
transformation theory,
including the analogues of the $T$-transformation and of the Darboux transformation
in M\"obius geometry, we refer the reader to
\cite{MN-BUD, MN-BOLL, MN-AMB, MN-IJM, MN-TOH, MN-MZ, Szer, Rog-Szer}.

\vskip0.2cm
In Section \ref{s:L-minimal}, we give an account of the theory of $L$-minimal surfaces.
A smooth immersed surface in $\R^3$ with no parabolic points is called
{\it Laguerre minimal} ($L$-{\it minimal}) if it is an extremal of the
Weingarten functional $$\int(H^2/K- 1)dA,$$ where $H$ and $K$ are the mean and
Gauss curvatures of the immersion, and $dA$ is the induced area element
of the surface \cite{Blaschke1, Blaschke, MN-TAMS, Pa1999}. The functional
and so its critical points are preserved by the Laguerre group.
For a Legendre immersion $f=(x,n) : S\to \La$,
the {\it Laguerre Gauss map}
is the map $\sigma_f : S \to \mathcal Q_\Sigma$
%
which assigns to each $s\in S$
the {\it middle sphere}, that is, the
oriented sphere of $\R^3$
of radius $H/K$ which is in oriented contact with the tangent plane
of $x$ at $x(s)$.
Away from umbilics and parabolic points, it turns out that $\sigma_f$ is a
spacelike immersion
into $\mathcal Q_\Sigma$, which is isometric to Minkowski 4-space $\R^{3,1}$.
Actually, $\sigma_f$ is a marginally outer trapped surface (MOTS) in $\R^{3,1}$ (cf. \cite{MNjgp, MN-MZ}
and the literature therein).
%
Interestingly enough,
one can prove that $\sigma_f$ has zero mean curvature vector
in $\mathcal Q_\Sigma \cong \R^{3,1}$ if and only if $f$ is $L$-minimal \cite{Blaschke, MN-TAMS}.
Associated to an arbitrary
nondegenerate
Legendre surface there is a naturally defined quartic differential $\Q_f$
and a quadratic differential $\P_f$ (cf. Section \ref{s:gen-L-min}).
An interesting feature of $L$-minimal immersions is that
this quartic differential is holomorphic \cite{MN-TAMS}.

\vskip0.2cm
In Section \ref{s:Pfaffian-sys}, $L$-minimal surfaces are interpreted as integral manifolds of a
suitable quasi-linear Pfaffian differential system on the manifold $M = L \times \mathbb R^4$.
The basic techniques of the theory of exterior differential systems are applied to study
the Cauchy problem for this system. We first use Cartan's test to prove that the system is in involution
and its solutions depend on four functions in one variable. Then, we compute the
polar equations of the system and apply the Cartan--K\"ahler theorem to solve the Cauchy problem.

\vskip0.2cm
In Section \ref{s:gen-L-min}, nondegenerate Legendre immersions $f$ whose associate
quartic differential $\Q_f$ is holomorphic are considered. Such immersions are called {\it generalized $L$-minimal}.
%
We prove that
a nondegenerate Legendre immersion
$f :S \to  \La$ is generalized $L$-minimal
if and only if the immersion is $L$-minimal
(in which case $\P_f$ is zero),
or is locally the $T$-transform of an $L$-minimal isothermic surface.
Using this, $L$-minimal isothermic surfaces and their $T$-transforms
can be characterized in terms of the differential geometry of their Laguerre Gauss maps.
It is proved that a Legendre immersion $f : S \to  \La$ is $L$-minimal and $L$-isothermic
if and only if
its Laguerre Gauss map $\sigma_f : S \to \R^{3,1}$
has zero mean curvature in some
spacelike, timelike, or (degenerate) isotropic hyperplane of $\R^{3,1}$.
Moreover, $f$ is generalized $L$-minimal
with non-zero $\P_f$
if and only if
its Laguerre Gauss map $\sigma_f : S \to \R^{3,1}$
has constant mean curvature $H=r$
in some translate of hyperbolic 3-space $\mathbb H^3(-r^2)\subset \R^{3,1}$,
de Sitter 3-space $\mathbb S^3_1(r^2)\subset \R^{3,1}$,
or has zero mean curvature in some translate of a time-oriented lightcone
$\mathcal L^3_\pm\subset \R^{3,1}$.
If the Laguerre Gauss map of $f$
takes values in a spacelike (respectively, timelike, isotropic) hyperplane, then the
Laguerre Gauss maps of the $T$-transforms of $f$ take values
in a translate of a hyperbolic 3-space (respectively, de Sitter 3-space,
time-oriented lightcone).
As an application of these results (cf. \cite{MN-MZ}), various instances of the Lawson
isometric correspondence \cite{Lawson} between
certain isometric constant mean curvature (CMC) surfaces in different hyperbolic 3-spaces
and of
the generalizations of Lawson's correspondence
in the Lorentzian \cite{Pa1990} and the (degenerate) isotropic situations
\cite{AiAk, AGM, Kob, Lee2005},
can be viewed as special cases of the $T$-transformation
of $L$-isothermic surfaces with holomorphic quartic differential.

\vskip0.2cm
For the subject of exterior differential systems and the Cartan--K\"ahler theorem we refer the reader to the monographs
\cite{BCGGG, Cartan-book, Grbook, GJbook, ILlibro} and \cite{McKay}. The summation convention over repeated indices is used
throughout the paper.

\section{Preliminaries and definitions}\label{s:pre}


\subsection{Notation}\label{ss:notation}

Vectors in $\R^6$ are denoted by $\mathfrak a, \mathfrak b, \dots$, and the components of a
vector $\mathfrak a$ with respect to the standard basis
${\boldsymbol\epsilon}_0,\dots,\boldsymbol\epsilon_5$ are indicated by $\mathfrak a^j$, $j =0,\dots,5$.
Vectors in $\R^4$ are denoted by capital letters $V, W, \dots$, and the components of a
vector $V$ with respect to the standard basis
$e_0,\dots,e_3$ are indicated by $V^a$, $a =0,\dots,3$. Vectors of $\R^3$ are denoted by
$u,v, \dots$, and the components of a
vector $v$ with respect to the standard basis are indicated by $v^i$, $i =1,2,3$.

For $X\in\mathfrak{gl}(3,\R)$ and $B \in \mathfrak{gl}(4,\R)$, we let
\[
 \widehat X = \begin{bmatrix} 1 & 0\\
 0 & X\end{bmatrix} \in \mathfrak{gl}(4,\R), \quad \widetilde B = K B K^{-1} \in \mathfrak{gl}(4,\R),
\]
where
\[
 K =  \begin{bmatrix}
 \frac{1}{\sqrt2} & \frac{1}{\sqrt2} & 0 & 0\\
 0 & 0 & 1 & 0\\
 0 & 0 & 0 & 1 \\
 \frac{1}{\sqrt2} & -\frac{1}{\sqrt2} & 0 & 0\end{bmatrix},
 \quad
 K^{-1} =  \begin{bmatrix}
 \frac{1}{\sqrt2} & 0 & 0 & \frac{1}{\sqrt2}\\
 \frac{1}{\sqrt2} & 0 & 0 & -\frac{1}{\sqrt2}\\
 0 & 1 & 0 & 0 \\
0 & 0 & 1 & 0\end{bmatrix}.
  \]

  For $v \in \R^3$ and $V \in \R^4$, we let $\widehat v$, $\widetilde V$ and $\mathfrak a(V)$ be given by
\[
 \widehat v = {}^t\!(0,v) \in \R^4, \quad \widetilde V = K V, \quad \mathfrak a(V) = {}^t\!\Big(1, \widetilde V, \frac{(V,V)}{2}\Big),
\]
where $(\,,\,)$ denotes the Lorentz scalar product of $\R^4$ given by
\[
 (V,W) = -V^0W^0 + V^1W^1 + V^2 W^2 + V^3W^3.
  \]
  We let $\R^{3,1}$ denote $\R^4$ with the above Lorentz scalar product.

\subsection{The Laguerre group}

Let $\R^{4,2}$ denote $\R^6$ with the orientation induced by the standard basis
${\boldsymbol\epsilon}_0,\dots,\boldsymbol\epsilon_5$ and
the scalar
product of signature $(4,2)$ given by
\begin{equation}\label{lie-sp}
\langle \a,\b\rangle=-(\a^0\b^5 + \a^5\b^0)-(\a^1\b^4 + \a^4\b^1)  +\a^2\b^2 +\a^3\b^3 = {}^t\!\a\eta\b,
\end{equation}
where
\[
\eta = (\eta_{ij}) = \begin{bmatrix} 0 & 0 & -L\\
0& I_2 & 0\\
-L & 0 & 0\end{bmatrix}, \quad L =  \begin{bmatrix} 0& 1\\1 & 0\end{bmatrix},
\quad I_2=  \begin{bmatrix} 1& 0\\0 & 1\end{bmatrix}.
\]

Let $G$ be the identity component of the pseudo-orthogonal group $\mathrm O(\eta)$ of \eqref{lie-sp},
\[
   \mathrm O(\eta)=\{A \in \GL(6,\R) \, : \, {}^t\!A\eta A = \eta\}.
     \]
The {\it Laguerre group} $L$ is the subgroup of $G$ consisting of all $A$
for which $A(\boldsymbol\epsilon_5) = \boldsymbol\epsilon_5$,
i.e.,
\[
 L := \{A \in G \, : \, A(\boldsymbol\epsilon_5) = \boldsymbol\epsilon_5\}.
  \]
In other words, $L$ is the 10-dimensional connected Lie group consisting of all
unimodular $6\times 6$ matrices $A$ satisfying the conditions
\[
 \langle A\x,A\y\rangle=\langle \x,\y\rangle,
  \quad -\langle A(\boldsymbol\epsilon_1 + \boldsymbol\epsilon_4),\boldsymbol\epsilon_1 + \boldsymbol\epsilon_4\rangle\geq 2,
   \quad A(\boldsymbol\epsilon_5) = \boldsymbol\epsilon_5.
    \]
The Lie algebra of $L$ is
\[
 \mathfrak l = \{ a = (  a^j_i) \in \mathfrak{gl}(6,\R) \, : \, \eta_{ik}  a^k_j
   + \eta_{jk}  a^k_i =0; \,   a^j_5 = 0 \}.
    \]

\subsubsection{Lorentz transformations, rotations, and boosts}\label{lor-rot-boo}

Let $G_6$ denote the group of {\it restricted Lorentz transformations} of $\R^{3,1}$ and
$P_{10} = G_6 \rtimes \R^4$ the corresponding inhomogeneous group, the {\it restricted Poincar\'e group}.
Retaining the notation of \S \ref{ss:notation}, every $A\in L$ can be written in the form
\begin{equation}\label{matrix-repr-lag-el}
 A(B;V) =
 \begin{bmatrix}
 1 & 0 & 0\\
 \widetilde V & \widetilde B& 0 \\
 \frac{(V,V)}{2} & {}^\ast\!\widetilde V \widetilde B & 1 \\
  \end{bmatrix},
\end{equation}
where $V \in\R^4$, $B\in G_6$, and
for each $W= {}^t\!(W^0,W^1,W^2,W^3)\in \R^4$,
\[
  {}^\ast\!\widetilde V W = -(\widetilde V^0 W^3 + \widetilde V^3 W^0) + \widetilde V^1 W^1
  +\widetilde V^2 W^2.
   \]

The mapping
\begin{equation}\label{iso-groups}
   G_6 \rtimes \R^4 \ni (B; V) \longmapsto A(B;V) \in L
   \end{equation}
is an isomorphism of Lie groups, which also gives a representation of the group $\mathbb E(3)= \SO(3) \rtimes \R^3$
 of Euclidean motions into the Laguerre group,
 \begin{equation}\label{euc-group-incl}
  \SO(3) \rtimes \R^3 \ni (R; v) \longmapsto A(\widehat R;\widehat v) \in L.
   \end{equation}
   The image of \eqref{euc-group-incl} coincides with the closed subgroup of $L$ consisting of all elements $A$
   fixing the vector $\boldsymbol\epsilon_1 + \boldsymbol\epsilon_4$. More explicitly, for
   $(R;v)\in \SO(3) \rtimes \R^3$, where $R=(r^i_j)$ and $v={}^t\!(v^1,v^2,v^3)$, we compute
\begin{equation}\label{exp-euc-group-incl}
 A(\widehat R;\widehat v)=
 \begin{bmatrix}
  1                 &      0                  &     0      &       0     &    0        &  0  \\
 \frac{v^1}{\sqrt2}  & \frac{1+r^1_1}{2}       & \frac{r^1_2}{\sqrt2} & \frac{r^1_3}{\sqrt2}  &  \frac{1-r^1_1}{2} &  0   \\
   v^2            & \frac{r^2_1}{\sqrt2}    &     r^2_2  & r^2_3    & \frac{-r^2_1}{\sqrt2}  &  0   \\
     v^3            & \frac{r^3_1}{\sqrt2}    &    r^3_2   & r^3_3    & \frac{-r^3_1}{\sqrt2}  &  0   \\
  \frac{-v^1}{\sqrt2} & \frac{1-r^1_1}{2}       & \frac{-r^1_2}{\sqrt2} & \frac{-r^1_3}{\sqrt2} & \frac{1+r^1_1}{2} &  0   \\
\frac{{}^t\!vv}{2}  & \frac{r^i_1v^i}{\sqrt2} & r^i_2v^i    &  r^i_3v^i & \frac{-r^i_1v^i}{\sqrt2}  &  1   \\
 \end{bmatrix}.
\end{equation}

Let $B_b\in G_6$ be the {\em pure Lorentz transformation} ({\it pure Lorentz boost}) of $\R^{3,1}$ given by
the symmetric matrix
   \[
 B_b =
 \begin{bmatrix}
 \beta & -\beta\, {}^t\!b\\
  -\beta\,  {b} & I_3 + \frac{\beta- 1}{{}^t\!b b} b {}^t\!b\\
  \end{bmatrix},
   \]
where $b \in \R^3$, ${}^t\!b b <1$, is the relative-velocity vector
and $\beta = (1 -{}^t\!b b)^{-\frac12}$.
We let $A(b) \in L$ denote the element of the Laguerre group corresponding to $B_b$ by means of \eqref{iso-groups},
i.e., $A(b) = A(B_b;0)$.
Since every restricted Lorentz transformation $B\in G_6$ has a unique decomposition as a product of a pure rotation
$\widehat R$, $R\in \SO(3)$, followed by a pure Lorentz boost $B_b$, $B = B_b \widehat R$,
it follows that every Laguerre transformation $A\in L$
can be decomposed
as a product
\[
 A = E A(b) T(s),
  \]
where, via \eqref{iso-groups} and \eqref{euc-group-incl}, $E$ corresponds to a Euclidean motion,
$A(b)$ corresponds to a pure Lorentz transformation
with velocity vector $b\in \R^3$, and $T(s)$ corresponds to the time translation
$\tau_s: {}^t\!(V^0,V^1,V^2,V^3) \mapsto {}^t\!(V^0+s,V^1,V^2,V^3)$ of the Poincar\'e group.

\subsection{The structure equations of Laguerre group}
For every $A \in L$, let $A_j = A \boldsymbol\epsilon_j$ denote the $j$th column vector of $A$. Thus,
$(A_0,\dots, A_5)$ is a {\it Laguerre frame}, i.e., a basis of $\R^6$ such that
\begin{equation}\label{lframe1}
 \langle A_i,A_j\rangle=\eta_{ij}, \quad A_5=\boldsymbol\epsilon_5
    \end{equation}
and $-\langle A_1 + A_4,\boldsymbol\epsilon_1 + \boldsymbol\epsilon_4\rangle\geq 2$, $i,j = 0,\dots, 5$.
Regarding the columns $A_j$, $j = 0,\dots,5$, of $A$ as
$\R^6$-valued functions,
there are unique 1-forms
$\{\omega^i_j\}_{\{i,j =0,\dots,5\}}$ so that
\begin{eqnarray}
  dA_j&=&\omega^i_jA_i,\label{lframe2a}
   \end{eqnarray}
where $\omega = (\omega^i_j) = A^{-1}dA$  is the the Maurer--Cartan form of $L$,
 i.e., the left-invariant $\mathfrak l$-valued 1-form $\omega = A^{-1}dA$.

Exterior differentiation of \eqref{lframe1} and \eqref{lframe2a} yields
the {\it structure equations of the Laguerre group}:
\begin{eqnarray}
 0 &=& \omega^k_i \eta_{kj}+\omega^k_j \eta_{ki}, \label{str-eq1}\\
 \omega^k_5 &=& 0,\label{str-eq3}\\
 d\omega^i_j &=& -  \omega^i_k \wedge \omega^k_j.\label{str-eq2}
\end{eqnarray}
It follows that
 \[
 \omega = (\omega^i_j) =
  \begin{bmatrix}
  0          &      0     &     0      &       0     &    0        &  0  \\
  \omega^1_0 & \omega^1_1 & \omega^1_2 & \omega^1_3  &    0        &  0   \\
  \omega^2_0 & \omega^2_1 &     0      & -\omega^3_2 & \omega^1_2  &  0   \\
  \omega^3_0 & \omega^3_1 & \omega^3_2 &       0     & \omega^1_3  &  0   \\
  \omega^4_0 &    0       & \omega^2_1 &  \omega^3_1 & -\omega^1_1 &  0   \\
     0       &-\omega^4_0 & \omega^2_0 &  \omega^3_0 & -\omega^1_0 &  0   \\
 \end{bmatrix} .
 \]

The left-invariant $\mathfrak l$-valued 1-form $\omega$ transforms by right translations according to
the rule
\[
R^\ast_A (\omega) = A^{-1}\omega A, \quad \forall \,\, A \in L.
\]

\subsection{Laguerre geometry in Euclidean 3-space: The quadric model}

A null line in $\R^{4,2}$ is a 1-dimensional subspace $[\mathfrak a] \subset \R^{4,2}$
spanned by a vector $\mathfrak a$ such that $\langle \mathfrak a, \mathfrak a \rangle = 0$.
The space of all null lines gives rise to the quadric $\mathcal Q$ in real projective space $\RP^5$
defined by the equation
\[
  \langle \mathfrak a, \mathfrak a \rangle =-2\a^0\a^5 -2\a^1\a^4 + (\a^2)^2   +(\a^3)^2 = 0.
  \]
  The quadric $\mathcal Q$ is called the {\em Lie quadric} \cite{Ce, J}.

The Laguerre group $L$ acts on the left on $\mathcal Q$ by $A\cdot [\mathfrak a] = [A\mathfrak a]$.
This action has a fixed point $\{[\boldsymbol\epsilon_5]\}$ (the ``point at infinity'') and two
nontrivial orbits,
\[
  \mathcal Q_\Sigma = \left\{ [\mathfrak a] \in \mathcal Q \, : \, \langle \mathfrak a,\boldsymbol\epsilon_5 \rangle
  \neq 0  \right\},
   \]
\[
\mathcal Q_\Pi = \left\{ [\mathfrak b] \in \mathcal Q \, : \, \langle \mathfrak b,\boldsymbol\epsilon_5 \rangle
  = 0, \,  \b \wedge \boldsymbol\epsilon_5 \neq 0 \right\}.
  \]
$\mathcal Q_\Sigma$ is a principal orbit, which is open and dense, while $\mathcal Q_\Pi$ has dimension 3.

\vskip0.2cm
We will now provide a geometric description of the orbits $\mathcal Q_\Sigma$ and $\mathcal Q_\Pi$.

\subsubsection{The space of oriented spheres}\label{ss:or-spheres}

The orientation of a 2-sphere in Euclidean space $\R^3$ is determined
by specifying a field of unit normals: inward normals if the radius is positive, outward normals if the radius
is negative. An oriented sphere with center $p$ and signed radius $r$ will be denoted by $S_r(p)$. We allow $r=0$,
in which case the sphere represents the point $p$; point spheres are not oriented. An oriented sphere $S_r(p)$,
$p = {}^t\!(p^1,p^2,p^3)$,
can be represented,
alternatively, by a vector $V(r,p) = {}^t\!(r,p^1,p^2,p^3)$ of $\R^4$ or by a null line $[\mathfrak a]$, spanned
by
\begin{equation}\label{a-r-p}
 \mathfrak a(r,p) = {}^t\!\Big(1, \frac{r +p^1}{\sqrt2},p^2,p^3, \frac{r -p^1}{\sqrt2},  \frac{{}^t\!pp -r^2}{2}\Big).
  \end{equation}
These two representations of oriented spheres are equivalent in that they are related by the mapping
\begin{equation}\label{equi-diff}
\R^4 \ni V \longmapsto [\mathfrak a(V)] \in \mathcal Q_\Sigma,
  \end{equation}
which is an equivariant diffeomorphism with respect of the action of the (restricted) Poincar\'e group on $\R^4$ and
of the Laguerre Group $L$ on $\mathcal Q_\Sigma$, taking into account the Lie group isomorphism defined by \eqref{iso-groups}.
The orbit $\mathcal Q_\Sigma$ may thus be identified with Minkowski 4-space $\R^{3,1}$ acted on by the (restricted) Poincar\`e group.

In particular, point spheres are represented by the elements of
\[
 \mathcal Q_\Sigma' = \left\{ [\mathfrak a] \in \mathcal Q_\Sigma \, : \, \langle \mathfrak a,\boldsymbol\epsilon_1
 + \boldsymbol\epsilon_4\rangle
  =0  \right\},
\]
and proper oriented spheres by the elements of
\[
 \mathcal Q_\Sigma'' = \left\{ [\mathfrak a] \in \mathcal Q_\Sigma \, : \, \langle \mathfrak a,\boldsymbol\epsilon_1
 + \boldsymbol\epsilon_4\rangle
  \neq 0  \right\}.
  \]
These two subsets are orbits for the action of the Euclidean group
$\SO(3) \rtimes \R^3$, but are not preserved by the action of the
full Laguerre group.
A Euclidean motion $(R;v)\in \SO(3) \rtimes \R^3$ maps the sphere $S_r(p)$ into the sphere $S_r(p')$ with the same radius
$r$ and center $p' = Rp+v$. A time translation $\tau_s$ maps the sphere $S_r(p)$ into the sphere $S_{r'}(p)$ with the
same center $p$ and signed radius $r' = r +s$. Finally, a pure Lorentz transformation $B_b$, $ b\in \R^3$,
${}^t\!b b<1$, takes the sphere $S_r(p)$ to the sphere $S_{r'}(p')$, where
\[
 r' = \beta(r-{}^t\!b p), \quad p' = p +
  \left((\beta- 1)\frac{{}^t\!p b}{{}^t\!  b  b}  - \beta r\right)  b.
\]

\subsubsection{The space of oriented planes}
Next, we examine the 3-dimensional orbit $\mathcal Q_\Pi$. The orientation of a
2-plane $\pi \subset \R^3$ is given by fixing a unit normal vector. For $h\in \R$, $p\in \R^3$, and $n={}^t\!(n^1,n^2,n^3)\in S^2$,
let $\Pi_h(n)$ and $\Pi_p(n)$
denote, respectively, the oriented plane with unit normal $n$ and height $h$ given by
\[
  \Pi_h(n) = \left\{x\in \R^3 \, : \, {}^t\! n x = h\right\},
  \]
and the oriented plane through the point $p$ normal to $n$ given by
\[
 \Pi_p(n) = \left\{x\in \R^3 \, : \, {}^t\! n(x-p) = 0\right\}.
  \]

The oriented plane $\Pi_p(n)$ can be uniquely represented by the null line $[\mathfrak b (n,p)]$ spanned by the
null vector
\begin{equation}\label{b-n-p}
 \mathfrak b(n,p) = {}^t\!\Big(0, \frac{1 +n^1}{2},\frac{n^2}{\sqrt2},\frac{n^3}{\sqrt2}, \frac{1 -n^1}{2}, \frac{{}^t\!np}{\sqrt2}\Big).
 \end{equation}
 Thus the points of $\mathcal Q_\Pi$ represent the oriented planes of Euclidean space.

 \subsubsection{Oriented contact}
Two oriented planes $\Pi_h(n)$ and $\Pi_{h'}(n')$ are in {\it oriented contact} if they have the same
unit normals, i.e., $n = n'$. Two oriented spheres $S_r(p)$ and $S_{r'}(p')$ are in {\it oriented contact}
if the Euclidean distance $d(p, p')$ between $p$ and $p'$ coincides with $|r-r'|$, that is, $d(p, p') = |r-r'|$.
This amounts to saying that
the Lorentz scalar product $\left(V(r,p)- V(r',p'), V(r,p)- V(r',p')\right) = 0$. The sphere $S_r(p)$ and the plane
$\Pi_h(n)$ are in oriented contact if $n^1p^1 +n^2p^2+ n^3p^3= h +r$. In particular, a point sphere
$S_0(p)$ and an oriented plane $\Pi_h(n)$ are in oriented contact if $p \in \Pi_h(n)$.

From the above identifications, it follows that two null lines $[\mathfrak a]$ and $[\mathfrak b]$ represent
oriented spheres or oriented planes in oriented contact if and only if $ \langle\mathfrak a, \mathfrak b  \rangle =0$.

\subsection{The Laguerre space}

We let $\boldsymbol\Lambda$ denote the set of all 2-dimensional subspaces
$\lambda(\mathfrak a, \mathfrak b)$ of $\R^6$ spanned by vectors $\mathfrak a$ and $\mathfrak b$ satisfying
\[
  \langle\mathfrak a, \mathfrak a  \rangle = \langle\mathfrak a, \mathfrak b  \rangle
  = \langle\mathfrak b, \mathfrak b  \rangle = 0, \quad \boldsymbol\epsilon_5 \notin \lambda(\mathfrak a, \mathfrak b).
   \]
We call $\boldsymbol\Lambda$ the {\it Laguerre space}. This is a 5-dimensional orbit
of the action of
the Laguerre group $L$ on the Grassmannian $G_2(6)$ of 2-planes in $\R^6$, where the action is given
by
\[
A \lambda(\mathfrak a, \mathfrak b) := \lambda(A \mathfrak a, A\mathfrak b), \quad \forall\, A \in L,\, \forall\,
\mathfrak a, \mathfrak b \in \R^6.
\]

\begin{remark}
The Laguerre space $\La$ can be seen as the space of projective lines in the Lie quadric
$\mathcal Q = \left\{[\mathfrak a] \, : \, \langle \mathfrak a, \mathfrak a \rangle =0\right\}$
which do not meet the the ``point at infinity'' $\{[\boldsymbol\epsilon_5]\}$.
The Laguerre space $\La$ is a dense open set of the set of all lines in $\mathcal Q$.
It is the complement of the set of lines through $[\boldsymbol\epsilon_5]$ in $\mathcal Q$ (cf. \cite{J}).\footnote{Notice
that in the lecture notes of G. R. Jensen \cite{J},
the symbol $\La$ is used to denote the set of all lines in $\mathcal Q$, while the set of lines in $\mathcal Q$
which do not meet the point at infinity is denoted by $\La'$.}
\end{remark}

We will provide two geometrical realizations of the Laguerre space $\boldsymbol\Lambda$.

\subsubsection{The Laguerre space as the unit tangent bundle of $\R^3$}
The Laguerre space $\boldsymbol\Lambda$ is identified with the bundle $T_1(\R^3) \cong \R^3 \times S^2$
of unit tangent vectors of $\R^3$ by means of the map
\begin{equation}\label{lag-space-utg}
  \R^3 \times S^2 \ni (p,n) \longmapsto \lambda(\mathfrak a(p), \mathfrak b(n,p)) \in \boldsymbol\Lambda,
  \end{equation}
where $\mathfrak a(p) = \mathfrak a(0,p)$ and $\mathfrak b(n,p)$ are given, respectively, as in \eqref{a-r-p} and \eqref{b-n-p}.

\subsubsection{The Laguerre space as the set of parabolic pencils of $\R^3$}

Let $\Pi_h(n)$ be an oriented plane and let $p$ be a point on this plane. The set $\widehat \lambda(p,n)$ of all oriented spheres
in oriented contact with $\Pi_h(n)$ at $p$ is called the {\it parabolic pencil} of spheres with base locus $p$ and radical
plane $\Pi_h(n)$.

  If $\widehat\lambda(p,n)$ is a parabolic pencil with base locus $p$ and radical
plane $\Pi_h(n)$, the null vectors $\mathfrak a(0,p)$ and $\mathfrak b(n,p)$ (cf. \eqref{a-r-p} and \eqref{b-n-p})
span a null plane and the oriented spheres of the pencil are represented by the null lines lying on the null
plane spanned by $\mathfrak a$ and $\mathfrak b$. {\it The Laguerre space can thus be interpreted as the set of
all parabolic pencils of oriented spheres in $\R^3$}.

Given a null plane $\lambda(\mathfrak a, \mathfrak b)\in \boldsymbol\Lambda$, there exists a
unique $[\hat{\mathfrak a}] \subset \lambda(\mathfrak a, \mathfrak b)$ so that
$\langle\hat{\mathfrak a}, \boldsymbol\epsilon_1 + \boldsymbol\epsilon_4  \rangle = 0$. Thus
$[\hat{\mathfrak a}]$ represents a point $p(\mathfrak a, \mathfrak b) \in \R^3$, which we call
the {\it Euclidean projection} of $\lambda(\mathfrak a, \mathfrak b)$. Using \eqref{lag-space-utg}, it follows that
the Euclidean projection $p(\mathfrak a, \mathfrak b)$ coincides with the
first component in $(p,n)$, where $(p,n)$
is the contact element which represents the null plane $\lambda(\mathfrak a, \mathfrak b)$.

In other words, {\it the Euclidean projection
$\boldsymbol\Lambda \to \R^3$, $\lambda \mapsto p(\lambda)$ agrees with the bundle projection map
$\R^3 \times S^2 \to \R^3$}. Observe, however, that the Laguerre group $L$ does not preserve the Euclidean projection.

\subsection{The structure equations of the Laguerre space}
The Laguerre group $L$ acts transitively on $\boldsymbol\Lambda$ and the map
\begin{equation}
 \pi_L : L \to \boldsymbol\Lambda, \quad A \longmapsto \lambda(A_0, A_1) = A\lambda(\boldsymbol\epsilon_0,
 \boldsymbol\epsilon_1)
  \end{equation}
makes $L$ into a principal bundle over $\boldsymbol\Lambda$ with structure
group
\[
 L_{0} = \{A = (A^i_j) \in L \, : \, A^j_0 = A^j_1 = 0, \, j = 2,3,4,5 \}.
  \]
The elements of $L_0$ are matrices of the form
\begin{equation}\label{elements-L}
X(d;b;x)=
 \begin{bmatrix}
 1   & 0  & 0         & 0         & 0                     & 0 \\
 d_1 &d_2 &\tilde x^1 &\tilde x^2 &{\frac{d_2}{2}{}^t\!xx} & 0\\
 0   & 0  &b^1_1      &b^1_2      &x^1                    & 0\\
 0   & 0  &b^2_1      &b^2_2      &x^2                    & 0\\
 0   & 0  & 0         &  0        &1/d_2                  &0 \\
 0   & 0  & 0         &  0        &-d_1/d_2               &1 \\
\end{bmatrix},
\end{equation}
where $b=(b_j^i)\in \SO(2)$, $d=(d_1,d_2)$, $d_2 \neq 0$,
$x= {}^t\!(x^1,x^2)\in\R^2$, $(\tilde x^1,\tilde x^2)=d_2{}^t\!xb$,
and $\frac{1}{d_2}(1 + d_2^2) + \frac12d_2 {}^t\!x x \geq 2$.

A local {\it Laguerre frame field} is
a local cross section $A : U \subset \boldsymbol\Lambda  \to L$ of
the {\it Laguerre fibration} $\pi_L$, where $U$ is an open subset
$U$ of $\boldsymbol\Lambda$. If $A : U \to L$ is a Laguerre frame field, any other Laguerre
frame field $\hat A$ on $U$ is given by
\begin{equation}\label{frame-gauge}
 \hat A = A X(d;b;x),
   \end{equation}
 where $X(d;b;x) : U \to L_0$
is a smooth map. If we set
\[
 \theta = (\theta^i_j), \quad \theta^i_j = A^\ast(\omega^i_j),
  \]
then the 1-forms
\begin{equation}\label{coframe}
   \theta^2_0,\,\theta^3_0,\, \theta^4_0,\, \theta^2_1,\, \theta^3_1
   \end{equation}
define a local coframe on the open subset $U\subset \boldsymbol\Lambda$.
If $A$ and $\hat A$ are related by \eqref{frame-gauge},
the corresponding $\mathfrak l$-valued 1-forms $\theta$ and $\hat \theta$ are related by
\begin{equation}\label{transf-rule}
 \hat \theta =  X^{-1}\theta X + X^{-1} dX.
   \end{equation}
In particular, the coframe \eqref{coframe} is subject to the following transformation rules
\begin{equation}\label{transf-rule-coframe}
\left[\begin{array}{c} \hat \theta^2_0\\ \hat \theta^3_0\\ \hat \theta^4_0\\ \hat \theta^2_1\\ \hat \theta^3_1
\end{array}\right] =
\left[\begin{array}{c|c|c} {}^t\!b & -d_2{}^t\!b x & d_1{}^t\!b\\
\hline
0 & d_2 & 0 \\
\hline
 0 & 0 & d_2 {}^t\!b
\end{array}\right]
\left[\begin{array}{c}  \theta^2_0\\  \theta^3_0\\  \theta^4_0\\  \theta^2_1\\  \theta^3_1
\end{array}\right].
\end{equation}

According to the transformation rules \eqref{transf-rule-coframe},
there are three naturally defined subbundles of the tangent bundle $T(\boldsymbol\Lambda)$ of the Laguerre
space:

\begin{enumerate}
\item the subbundle $\mathcal Z \subset T(\boldsymbol\Lambda)$, defined by requiring that $\theta^4_0 =0$;
\item the subbundle $\mathcal H \subset T(\boldsymbol\Lambda)$, defined by requiring that $\theta^2_1 =\theta^3_1=0$;
\item the subbundle $\mathcal K \subset T(\boldsymbol\Lambda)$, defined by requiring that $\theta^4_0 =\theta^2_1 =\theta^3_1=0$.
\end{enumerate}
Notice that $\text{rank}(\mathcal Z) = 4$, $\text{rank}(\mathcal H) = 3$, $\text{rank}(\mathcal K) = 2$, and that
\[
 \mathcal K = \mathcal Z \cap \mathcal H.
  \]
  From the structure equations \eqref{str-eq2}, it follows that
  \[
   d\theta^4_0 \equiv \theta^2_0 \wedge \theta^2_1 + \theta^3_0 \wedge \theta^3_1 \mod \{\theta^4_0\}.
  \]
  This implies that $d\theta^4_0 \wedge d\theta^4_0 \wedge \theta^4_0$ is never zero on $\boldsymbol\Lambda$,
  and hence $\theta^4_0$ defines a {\it contact structure} on the Laguerre space $\boldsymbol\Lambda$.
The subbundle $\mathcal Z$ is a {\it contact distribution} on
$\boldsymbol\Lambda$.

\begin{remark}
Observe that the 1-form $\theta^4_0=-\langle dA_0,A_1 \rangle$. Therefore,
the 1-form $\theta^4_0$ corresponds via
the map \eqref{lag-space-utg} to the 1-form $\frac{1}{\sqrt2} \sum_{i=1}^3 n^i dp^i = \frac{1}{\sqrt2} dp\cdot n$.
Thus, the contact structure defined by $\theta^4_0$
coincides with
the natural contact structure on $\R^3 \times S^2$ given by the 1-form
$\alpha_{(p,n)} = \sum_{i=1}^3  n^idp^i = dp\cdot n$.
\end{remark}

\section{Moving frames for Legendre immersions}\label{s:nondeg-Leg-imm}

\begin{defn}\label{legendre-surf}
Let $S$ be an oriented, connected, 2-dimensional manifold. A smooth immersion $f : S \to \boldsymbol\Lambda$ is
called {\it Legendre} if
\begin{equation}\label{legendre-cond}
 df_{|s}(T_s S)
 \subset \mathcal Z_{f(s)}, \quad \forall\, s \in S.
  \end{equation}
  This amounts to saying that $f^\ast (\theta^4_0) = 0$ on $S$.
  \end{defn}

Two Legendre immersions $f,f' : S \to \La$ are said to be $L$-{\it equivalent}
if there exists $A\in L$, such that $Af'(S) = f(S)$. In this case, the two immersions $f$ and $f'$
are considered to be the same geometric object.

\begin{remark}
An immersed surface $x : S \to \R^3$, oriented by a unit normal
field $n : S \to S^2$, induces a lift $f= (x,n): S \to \La$ of $x$ to
$\boldsymbol\Lambda\cong \R^3 \times S^2$ which
is a Legendre immersion, since $dx \cdot n=0$ . We call $f$ the {\it Legendre lift} of
$x$. Locally and up to $L$-equivalence, any Legendre immersion arises in this way.
However, observe that in general, if
$f = (x,n) : S \to \boldsymbol\Lambda\cong \R^3 \times S^2$ is a Legendre immersion, $x$ need not be an immersion into
$\R^3$.
In particular, two immersions $x,x' : S\to \R^3$ are $L$-equivalent if their
Legendre lifts are $L$-equivalent.

For a generic Legendre immersion $f = (x,n)$, the quadratic form $dn\cdot dn$ is positive semidefinite,
that is $dn\cdot dn\geq 0$.
\end{remark}

\begin{defn}\label{nondegenerate}
A Legendre immersion $f = (x,n) : S \to \La$ is said to be {\it nondegenerate} if (1)
the quadratic form $dn\cdot dn$ is positive definite and (2) the quadratic forms $dx\cdot dn$ and
$dn\cdot dn$ are everywhere linearly independent on $S$.
\end{defn}

If $f =(x,n)$ is the Legendre lift  of an immersion $x : S\to \R^3$, the condition that $f$ is
nondegenerate
means that $x$ is umbilic free and its Gauss curvature is everywhere different from zero.

The condition that $dn\cdot dn$ is positive definite will be assumed throughout.

\subsection{Construction of the canonical frame}\label{ss:construction}

Let $f : S \to \La$ be a Legendre immersion. A local {\it Laguerre frame field} along $f$
is a smooth map $A : U \subset S \to L$ defined on an open subset $U$ of $S$ such that
$\pi_L ([A(s)]) = f(s)$, for every $s\in U$.
If $f : S \to \La$ is a Legendre immersion and
$A : U \to L$ is a Laguerre frame field on $U$, then
\[
  \alpha^4_0 = 0 , \quad \alpha^2_1 \wedge \alpha^3_1 \neq 0,
  \]
where $\alpha^i_j := A^\ast(\omega^i_j)$. Differentiating $\alpha^4_0 = 0$ and using
\eqref{str-eq2} yields
\[
 \alpha^2_0 \wedge \alpha^2_1 + \alpha^3_0 \wedge \alpha^3_1 = 0.
  \]
By Cartan's Lemma, there are smooth functions $h_{ij} =h_{ji} : U \to \R$, $i,j = 1,2$, which depend on $A$,
such that
\begin{equation}\label{alfa2030-2131}
 \begin{array}{c}
  \alpha^2_0 = h_{11}\alpha^2_1 + h_{12}\alpha^3_1 \\
   \alpha^3_0 = h_{12}\alpha^2_1 + h_{22}\alpha^3_1.
     \end{array}
      \end{equation}
Any other frame field $\hat A$ on $U$ is given by $$\hat A = A X(d;b;x),$$ where $X =X(d;b;x) : U \subset S \to L_0$
is a smooth map. By \eqref{transf-rule}, $\hat \alpha$ and $\alpha$ are related by
$$
  \hat \alpha = X^{-1}\alpha X + X^{-1}dX.
   $$
This implies
\begin{equation}\label{transf-rule-alfa2131}
\left[\begin{array}{c}  \hat \alpha^2_1\\ \hat \alpha^3_1 \end{array}\right]
= d_2 {}^t\!b
\left[ \begin{array}{c}   \alpha^2_1\\  \alpha^3_1 \end{array}\right],
\end{equation}
and
\begin{equation}\label{transf-rule-alfa2030}
\left[\begin{array}{c}
\hat \alpha^2_0\\ \hat \alpha^3_0
\end{array}\right]
= {}^t\!b\left(\left[\begin{array}{c}   \alpha^2_0\\  \alpha^3_0 \end{array}\right]
+ d_1  \left[ \begin{array}{c}  \alpha^2_1\\  \alpha^3_1 \end{array}\right]
\right).
\end{equation}
From \eqref{alfa2030-2131}, \eqref{transf-rule-alfa2131}, and \eqref{transf-rule-alfa2030}, it follows
that
\begin{equation}\label{transf-rule-hij}
\left[\begin{array}{cc}  \hat h_{11} & \hat h_{12}\\ \hat h_{12} & \hat h_{22} \end{array}\right]
= \frac{1}{d_2} {}^t\!b \left[ \begin{array}{cc} h_{11} +d_1&  h_{12}\\ h_{12} &  h_{22} +d_1 \end{array}\right]b.
\end{equation}
Therefore,
\begin{equation}\label{h11+h22}
 \left(\hat h_{11} + \hat h_{22}\right) =
  \frac{1}{d_2}\left[  (h_{11} +   h_{22}) + 2d_1\right].
   \end{equation}

This shows that locally there exist Laguerre frame fields such that
\begin{equation}\label{2-order-cond}
  h_{11} +   h_{22}=0.
   \end{equation}

\begin{defn}
A Laguerre frame field $A : U \to L$ along $f$ is said to be of {\it first order} if $h_{11} +  h_{22}=0$ on $U$.
\end{defn}

It follows from \eqref{h11+h22} that if $\hat A$ and $A$ are first order frame fields on $U$, then
$\hat A = A X(d;b;x)$ and $d_1 = 0$. This means that the totality of first order frame fields
defines an $L_1$-fibre bundle
on $S$ whose structure group is
\[
  L_1 = \{X(d;b;x) \in L_0 \, : \; d_1 =0\}.
   \]

\begin{remark}
A {\it sphere congruence} is a 2-parameter family of oriented spheres, i.e., a
smooth map $\sigma : S \to \mathcal Q_\Sigma$ of a connected surface into the space of oriented spheres.
By an envelope of $\sigma : S \to \mathcal Q_\Sigma$ we mean a Legendre map $f = (x,n) : S \to \La$
such that $\sigma(s)$ and the plane $\Pi_{x(s)}(n(s))$ are in oriented contact at $x(s)$, for every
$s \in S$.
\end{remark}

\begin{defn}
Let $f : S \to \La$ be a Legendre immersion and let $A : U \subset S \to L$ be a first order frame field
along $f$.
The smooth map $U\ni s \mapsto [A_0(s)] \in \mathcal Q_\Sigma$
is independent of the choice of first order frame field $A$. This allows the definition
of a smooth map on the whole $S$, $$\sigma_f : S \to \mathcal Q_\Sigma.$$
The map $\sigma_f$ determines a sphere congruence
which is known as the {\it middle congruence} (cf. \cite{Blaschke}) or {\it Laguerre Gauss map} of $f$. By construction,
the Legendre immersion $f: S \to \La$ is an envelope of the middle congruence.
\end{defn}

According to \eqref{transf-rule-alfa2030}, for a change of first order frames $\hat A = A X$, where $X : U \to L_1$,
we have
\begin{equation}\label{alfa2030-2131-bis}
\left[\begin{array}{c}  \hat \alpha^2_0\\ \hat \alpha^3_0 \end{array}\right]
 = {}^t\!b \left[ \begin{array}{c}   \alpha^2_0\\  \alpha^3_0 \end{array}\right].
   \end{equation}
This implies that the Legendre immersion $f$ induces on $S$ a {\it quadratic form}
\[
  \Phi_f = (\alpha^2_0)^2 + (\alpha^3_0)^2,
   \]
the {\it Laguerre metric}
of the immersion, and an exterior differential 2-form
\[
  \Omega_f = \alpha^2_0\wedge \alpha^3_0,
   \]
the {\it Laguerre area element} of the immersion.

\begin{remark}\label{rk:sigma-f-spacelike}
If the Legendre immersion  $f : S  \to \La$ is {nondegenerate}, the form $\Phi_f$ is positive
definite and ${\Omega_f}\vert_s \neq 0$, for every $s\in S$.
In particular, note that $\Phi_f$ is the metric induced on $S$ by the Laguerre Gauss
map $\sigma_f$ from the Lorentz product $\langle\,,\rangle$ on $\mathcal Q_\Sigma \cong \R^{3,1}$, i.e.,
$\Phi_f = \langle d\sigma_f, d\sigma_f\rangle$. Thus, if $f$ is nondegenerate, its Laguerre Gauss map
$\sigma_f$ defines a spacelike immersion of $S$ into Minkowski 4-space $\R^{3,1}$
and ${\Omega_f}$ is the induced area element of $\sigma_f :S\to \mathcal Q_\Sigma$.

\end{remark}


Let $f : S \to \La$ be a nondegenerate Legendre immersion and let $A : U \subset S \to L$ be a first
order frame field along $f$. Then there exist smooth functions $a_1, a_2 : U \to \R$ depending on $A$ such that
\begin{equation}\label{alfa10}
 \alpha^1_0 = a_1 \alpha^2_0 + a_2 \alpha^3_0.
   \end{equation}

   If $\hat A = A X(d;b;x)$ is any first order frame field on $U$, where $X : U \to L_1$, it follows from
   \eqref{transf-rule} that
\begin{equation}\label{alfa10-bis}
 \hat \alpha ^1_0 = \frac{1}{d_2}\left(\alpha^1_0 -d_2 x^1 \alpha^2_0 -d_2 x^2 \alpha^3_0  \right).
  \end{equation}
If we write  $\hat \alpha^1_0 = \hat a_1 \hat \alpha^2_0 + \hat a_2 \hat \alpha^3_0$, \eqref{alfa2030-2131-bis}
and \eqref{alfa10-bis} yield
\begin{equation}\label{a1a2-trans}
\left[\begin{array}{c}  \hat a_1\\ \hat a_2 \end{array}\right]
 = \frac{1}{d_2}{}^t\!b \left[ \begin{array}{c}   a_1 - d_2x^1\\  a_2 -d_2x^2 \end{array}\right].
   \end{equation}

From the above transformation formula, it follows that, about any point $s\in S$, there exists
an open neighborhood $U$ and a Laguerre frame field of first order defined on $U$
 for which $a_1 = a_2 = 0$, i.e., $\alpha^1_0 =0$.

\begin{defn}
A first order Laguerre frame field $A : U \to L$ along $f$ is said to be of {\it second order} if $\alpha^1_0 = 0$ on $U$.
\end{defn}

It follows from \eqref{a1a2-trans} that if $\hat A$ and $A$ are second order frame fields on $U$, then
$\hat A = A X(d;b;0)$ and $d_1 = 0$. This means that the totality of second order frame fields generates
an $L_2$-fibre bundle
on $S$ whose structure group is
\[
  L_2 = \{X(d;b;x) \in L_1 \, : \; x ={}^t\!(0,0)\}.
   \]

\begin{remark}[The Laguerre transform]\label{rk:lag-transf}
For a change of second order frames $\hat A = A X$, where $X : U \subset S \to L_2$, it is easily seen
that the projective lines $\lambda(A_0,A_4)$ and $\lambda(\hat A_0,\hat A_4)$ do coincide.
Accordingly, we can define a new Legendre immersion (possibly degenerate)
\[
 \check f : S \to \La, \quad \check f(s) := \lambda(A_0(s),A_4(s)), \quad \forall\, s \in S.
  \]
  The Legendre immersion $\check f$ is called the {\it Laguerre transform} of $f$. The Laguerre
  transform $\check f$ is the second envelope of the middle sphere congruence $\sigma_f$.
\end{remark}

From \eqref{transf-rule-hij} it follows that about any point $s\in S$ there exists
an open neighborhood $U$ and a Laguerre frame field of second order defined on $U$
 for which $h_{11} = - h_{22} = 1$ and $h_{12} = 0$.

\begin{defn}
A second order Laguerre frame field  $A : U \to L$ along $f$
for which $h_{11} = - h_{22} = 1$ and $h_{12} = 0$ on $U$ is called a {\it canonical} Laguerre frame field.
\end{defn}

According to \eqref{transf-rule-hij}, two canonical frames $\hat A, A : U \to L$  are related
by $\hat A = A X(d;b;0)$, where $X$ is a smooth map taking values in the group
\[
  \mathbb Z_2 = \left\{X(d;b;x) \in L_2 \, : \; d_2 =1, \, b = \pm  \begin{bmatrix} 1 & 0\\0&1\end{bmatrix}\right\}.
   \]
If $A = (A_0,A_1,A_2,A_3,A_4,A_5) : U\subset S \to L$ is a canonical frame along $f : S \to \La$ and
$U$ is connected, the only other canonical frame on $U$ is given by
\begin{equation}\label{other-canonical}
 \hat A = \left(A_0,A_1, -A_2, -A_3, A_4, A_5\right).
   \end{equation}
A canonical frame $A = (A_0,A_1,A_2,A_3,A_4,A_5)$ is characterized by the following equations
\begin{equation}\label{dA=Aalpha}
\begin{cases}
dA_0 = \alpha^2_0 A_2 +\alpha^3_0 A_3,\\
dA_1 = \alpha^1_1 A_1 +\alpha^2_0 A_2 - \alpha^3_0 A_3,\\
dA_2 = \alpha^1_2 A_1 +\alpha^3_2 A_3 + \alpha^2_0 (A_4 + A_5), \\
dA_3 = \alpha^1_3 A_1 -\alpha^3_2 A_2 + \alpha^3_0 (-A_4 + A_5),\\
dA_4 = \alpha^1_2 A_2 +\alpha^1_3 A_3 - \alpha^1_1 A_4, \\
dA_5 =0.
\end{cases}
\end{equation}

The totality of canonical frame fields on $S$ defines
a $\mathbb Z_2$-fibre bundle $\pi_f : \mathcal F(f) \to S$,
where
\[
  \mathcal F(f) = \{(s,A(s)) \in S \times L\}
   \]
and $A$ is a canonical frame field along $f$ defined on a neighborhood of $s$,
and $\pi_f : \mathcal F(f) \ni (s, A(s))  \mapsto  s\in S$.

\begin{remark}\label{r:global-can-fr}
Up to $L$-equivalence, and a 2:1 covering, any nondegenerate Legendre
immersion
of an oriented surface $S$
admits a globally defined canonical frame.
In fact, consider a connected component $\tilde S$ of $\mathcal F(f)$ and let $\tilde f : \tilde S \to \La$
be given by $\tilde f = f\circ \pi_f$. By construction, $\tilde f$ is again a nondegenerate Legendre immersion
and $f(S) = \tilde f(\tilde S)$. Moreover, $ \tilde A : \tilde S \to L$, $(s,A(s))  \mapsto A(s)$, is a global
canonical frame along $\tilde f : \tilde S \to \La$.
\end{remark}

\subsection{The structure equations of the canonical frame}

Let $f : S \to \La$ be a nondegenerate Legendre immersion and let $A : S \to L$ be a
canonical frame field along $f$. According to Remark \ref{r:global-can-fr}, it is not restrictive
to assume that $A$ globally defined on $S$.
From the above discussion, $(\alpha^2_0,\alpha^3_0)$
defines a coframe on $S$, referred to as the {\it canonical coframe}, and
\begin{equation}\label{alfa21-31}
 \alpha^2_1 = \alpha ^2_0, \quad \alpha^3_1 = -\alpha ^3_0, \quad \alpha^1_0 = 0, \quad \alpha^4_0 =0.
  \end{equation}
  Differentiating $\alpha^1_0 = 0$ yields
\[
  \alpha^1_2 \wedge \alpha ^2_0 + \alpha^1_3 \wedge \alpha ^3_0 = 0.
 \]
Thus, by Cartan's Lemma there
exist smooth functions $p_1$, $p_2$, $p_3 : S \to \R$
such that
\begin{equation}\label{alfa12-13}
  \alpha^1_2 =  p_1\alpha^2_0 + p_2\alpha^3_0, \quad \alpha^1_3 = p_2\alpha^2_0+p_3\alpha^3_0.
   \end{equation}
Next, write
\begin{equation}\label{alfa32}
  \alpha^3_2 =  q_1\alpha^2_0 + q_2\alpha^3_0,
   \end{equation}
where $q_1$, $q_2 : S \to \R$ are smooth functions. We call the functions  $p_1$, $p_2$, $p_3$, $q_1$, $q_2$
the {\it invariant functions} of the Legendre immersion $f$ (with respect to the canonical frame field $A$).
Exterior differentiation of $\alpha^2_0$ and $\alpha^3_0$, using the structure equations \eqref{str-eq2} and
equation \eqref{alfa32}, yields
\begin{equation}\label{se0}
 d\alpha^2_0 = q_1\alpha^2_0\wedge\alpha^3_0, \quad  d\alpha^3_0=q_2\alpha^2_0\wedge\alpha^3_0.
  \end{equation}
Differentiating $\alpha^2_1$ and $\alpha^3_1$, using the structure equations \eqref{str-eq2} and
the equations \eqref{se0}, yields
\begin{equation}\label{alfa11}
  \alpha^1_1 =  2q_2\alpha^2_0 -2 q_1\alpha^3_0.
  \end{equation}
From the structure equations \eqref{str-eq2}, we also obtain the following equations:
\begin{eqnarray}
dq_1\wedge\alpha^2_0+dq_2\wedge\alpha^3_0&=&
(p_3-p_1-{q_1}^2-{q_2}^2)\alpha^2_0\wedge\alpha^3_0,\label{se1}\\
dq_1\wedge\alpha^3_0-dq_2\wedge\alpha^2_0&=& - p_2\alpha^2_0\wedge\alpha^3_0,\label{se2}\\
dp_1\wedge\alpha^2_0+dp_2\wedge\alpha^3_0&
= &(-3q_1p_1-4q_2p_2+q_1p_3)\alpha^2_0\wedge\alpha^3_0,\label{se3}\\
dp_2\wedge\alpha^2_0+dp_3\wedge\alpha^3_0&=&
(-3q_2p_3-4q_1p_2+q_2p_1)\alpha^2_0\wedge\alpha^3_0.\label{se4}
\end{eqnarray}
The equations \eqref{se0}, \eqref{se1}, \eqref{se2}, \eqref{se3} and \eqref{se4} will be referred
to as the {\it structure equations of the Legendre immersion}
$f$.
The existence of a canonical frame
field along $f$ under the nondegeneracy assumption was first proved in \cite{MN-TAMS}.
The smooth functions $q_1$,
$q_2$, $p_1$, $p_2$, $p_3$ form a complete system of Laguerre
invariants for $f$.


\begin{remark}\label{r:p-q-change}
If ${A} : S \to L$ is a canonical frame field along $f$,
the only other canonical frame field is given by $\hat A$ as in \eqref{other-canonical}.
Under this frame change, the invariants $p_1, p_2, p_3, q_1, q_2$ transform by
$$
\hat q_1= -q_1,\quad   \hat q_2= -q_2,\quad
\hat p_1= p_1, \quad \hat p_2= p_2,  \quad \hat p_3= p_3.
$$
Thus, there are well defined global functions $\text{\sc j}$,
$\text{\sc w}: S \to \R$ such that
locally
\begin{equation}\label{mixed-invariants}
 \text{\sc j} =\frac{1}{2}(p_1-p_3), \quad
  \text{\sc w}= \frac{1}{2}(p_1+p_3).
   \end{equation}
\end{remark}


\subsection{Framed Legendre immersions and the Laguerre Pfaffian system}\label{ss:LPS}

In this section, nondegenerate Legendre immersions will be interpreted as integral manifolds of a quasi-linear
Pfaffian differential system.

By a {\it framed Legendre immersion} $(S,f,A)$ is meant a nondegenerate Legendre immersion $f: S \to \La$ endowed with
a canonical frame $A : S \to L$. The immersion $f$ is then given by $\pi_L \circ A$ and the normal frame field $A$ satisfies
the conditions
\begin{equation}\label{framed-imm-conds}
 \alpha^2_0\wedge \alpha^3_0 \neq 0, \quad \alpha^2_1 -\alpha^2_0 = \alpha^3_1 +\alpha^3_0 =\alpha^1_0 =
 \alpha^4_0 = 0.
\end{equation}
Conversely, if $A : S \to L$ is a smooth map satisfying \eqref{framed-imm-conds}, then $f = \pi_L \circ A : S \to\La$
is a nondegenerate Legende immersion and $A$ is a canonical frame field along $f$. Thus framed Legendre immersions can be interpreted as the integral manifolds of the Pfaffian differential system on $L$ defined by the equations
\begin{equation}\label{PS-framed-imm}
  \omega^2_1 -\omega^2_0, \quad \omega^3_1 +\omega^3_0 =0, \quad  \omega^1_0 =0, \quad
 \omega^4_0 = 0,
\end{equation}
with independence condition $\omega^2_0\wedge \omega^3_0 \neq 0$. This Pfaffian differential system
is not in involution. Its involutive prolongation is given by the following system.

Let $M := L \times \mathbb R^5$ and denote by $(q_1,q_2, p_1, p_2, p_3)$ the coordinates in $\mathbb R^5$.
Let $\omega^1 = \omega^2_0$, $\omega^2 = \omega^3_0$, and define on $M$ the absolute parallelism given by the coframe
field $(\omega^i, \eta^a, \pi^i, \zeta^s)$, $a = 1,\dots,8$; $i =1,2$; $s=1,2,3$, where
\[
\begin{aligned}
&\eta^1 = \omega^4_0, \quad \eta^2 = \omega^1_0, \quad \eta^3 = \omega^2_1-\omega^1, \quad \eta^4 = \omega^3_1+\omega^1,\\
&\eta^5 = \omega^3_2 -q_1\omega^1 -q_2\omega^2, \quad \eta^6 = \omega^1_1 -2q_2\omega^1 +2q_1\omega^2, \\
&\eta^7 = \omega^1_2 -p_1\omega^1 -p_2\omega^2, \quad \eta^8 = \omega^1_3 -p_2\omega^1 -p_3\omega^2, \\
&\pi^i = dq_i, \quad \zeta^s = dp_s.
\end{aligned}
\]

Let $\mathcal I$ be the ideal of the algebra of exterior differential forms on $M$ generated by
\[
 \eta^1, \dots, \eta^8, d\eta^1, \dots, d\eta^8.
   \]
%
Let $(\mathcal I, \Omega)$ be the Pfaffian differential system on $M$ defined
by the differential ideal $\mathcal I$ with the independence condition
\[
 \Omega = \omega^1 \wedge \omega^2 \neq 0.
  \]
%
%
If $(A,q_1,q_2, p_1, p_2, p_3): S \to M$ is an integral manifold of $(\mathcal I,\Omega)$, then $f : S \to \La$,
$s\mapsto f(s) = \pi_L(A(s))$, defines a Legendre immersion with canonical frame field $A$
and corresponding invariant functions $q_1,q_2, p_1, p_2, p_3$. Conversely, any framed Legendre
immersion $(S,f,A)$ defines an integral manifold of the Pfaffian system $(\mathcal I,\Omega)$ by
\[
  S\ni s \longmapsto (A(s),q_1,q_2, p_1, p_2, p_3) \in M,
   \]
   where $(q_1,q_2, p_1, p_2, p_3)$ are the invariant functions of $A$.
Summarizing, framed Legendre immersions, together with the associate invariant functions, may be
regarded as being the integral manifolds of the Pfaffian
system $(\mathcal I,\Omega)$ on $M$.

\begin{defn}
The Pfaffian system $(\mathcal I,\Omega)$
on $M$ will
be referred to as the {\it Laguerre differential system} and will be denoted by $(M, \mathcal I,\Omega)$.
\end{defn}

\subsection{Relations with Euclidean geometry}\label{ss:euclidean-inv}

Let $f = (x,n) : S \to \La$ be the
Legendre lift of an oriented immersion $x : S \to \mathbb R^3$ with unit normal field $n$
and suppose that $x$
is umbilic free and that its Gauss curvature is everywhere different from zero.
%
We may assume the existence of a global principal frame field $e = (n,e_2,e_3 ;x) : S \to \mathbb E(3)$
along $x$, such that, at every point $s\in S$,
$(e_2 ,e_3)_{|s}$ is a positive basis of $dx_{|s} (T_s S)$ and $e_2 ,e_3$
are along the principal directions.
The frame field $e$ satisfies the equations
\[
 dx = \varphi^2 e_2 + \varphi^3 e_3, \quad de_2 = \varphi^3_2e_3 -\varphi^2_1n,
 \quad de_3 = -\varphi^3_2 e_2 - \varphi^3_1 n,
\]
and
\[
  dn = \varphi^2_1 e_2 +\varphi^3_1 e_3,
\]
where $(\varphi^2, \varphi^3)$ is the dual coframe of $(e_2,e_3)$.
The condition that $e_2,e_3$ are along principal directions is expressed by
\[
 \varphi^2_1 = -a \varphi^2, \quad \varphi^2_1 = -c \varphi^3,
\]
where $a>c$ are the principal curvatures.

By \eqref{euc-group-incl} and \eqref{exp-euc-group-incl}, the principal frame field $e$ gives
rise to a Laguerre frame field
$A = A(n,e_2,e_3 ;x)$ along the Legendre lift $f=(x,n)$ of the immersion $x$. An easy
computation shows that the
frame field $A$ satisfies the following equations
\[
\begin{cases}
\displaystyle dA_0 = \varphi^2 A_2 +\varphi^3 A_3,\\
\displaystyle dA_1 = \frac{\varphi^2_1}{\sqrt2} A_2 +\frac{\varphi^3_1}{\sqrt2} A_3,\\
\displaystyle dA_2 = -\frac{\varphi^2_1}{\sqrt2} A_1 +\varphi^3_2 A_3 + \frac{\varphi^2_1}{\sqrt2} A_4 +\varphi^2 A_5, \\
\displaystyle dA_3 = -\frac{\varphi^3_1}{\sqrt2} A_1 -\alpha^3_2 A_2 + \varphi^3_1 A_4 +\varphi^3 A_5,\\
\displaystyle dA_4 = -\frac{\varphi^2_1}{\sqrt2} A_2 - \frac{\varphi^3_1}{\sqrt2} A_3, \\
dA_5 =0.
\end{cases}
\]
According to \eqref{dA=Aalpha}, we have
\[
 \alpha^2_0 = -\frac{\sqrt2}{a} \varphi^2_1, \quad \alpha^3_0 = -\frac{\sqrt2}{c} \varphi^3_1.
  \]
Following the reduction procedure described
in \S \ref{ss:construction}, we may adapt $A$ to a first order Laguerre frame
field $A'$ along the Legendre lift $f$, by setting
\[
 A' = AX(d;I_2;0),  \quad d = \big(\sqrt2\frac{H}{K},1\big),
  \]
where $H =\frac{1}{2}(a +c)$ is the mean curvature and $K =ac$ the Gauss curvature of $x$.
The 1-form ${\alpha'}^2_0$ and ${\alpha'}^3_0$ take the form
\[
 {\alpha'}^2_0 = -\frac{c-a}{2K} \varphi^2_1, \quad {\alpha'}^3_0 = \frac{c-a}{2K} \varphi^3_1.
  \]
  Therefore,
\[
 \Phi_f = \frac{H^2-K}{K^2} dn \cdot dn, \quad \Omega_f =-\frac{H^2-K}{K} dA,
  \]
where $dA$ denotes the Euclidean area element of $x: S \to \mathbb R^3$.

\begin{remark}
An easy computation shows that, with respect to the first order frame field $A'$, the Laguerre Gauss map
$\sigma_f : S \to \mathcal Q_\Sigma$ is expressed by
\[
 \sigma_f = S_{\frac{H}{K}}\Big(x + \frac{H}{K} n\Big),
\]
that is, $\sigma_f(s)$ represents the oriented sphere centered at $x(s) + \frac{H}{K}(s) n(s)$ with signed radius
$\frac{H}{K}(s)$.
\end{remark}

If we let
\[
d\Big( \frac{H}{K}\Big) = \frac{c-a}{2K}\left(\Big( \frac{H}{K}\Big)_2\varphi^2_1+
\Big( \frac{H}{K}\Big)_3 \varphi^3_1 \right),
\]
we compute
\[
 {\alpha'}^1_0 = -\sqrt2\Big( \frac{H}{K}\Big)_2{\alpha'}^2_0+
  \sqrt2\Big( \frac{H}{K}\Big)_3{\alpha'}^3_0.
    \]
    We then further adapt $A'$ to a second order frame field $A''$ along $f$,
    \[
     A'' = A' X(d;I_2;y), \quad d= (0,1), \quad y = {}^t\!\Big(-\sqrt2\Big( \frac{H}{K}\Big)_2,
     \sqrt2\Big( \frac{H}{K}\Big)_3 \Big).
    \]
It turns out that the frame field $A''$ is actually a canonical frame field along $f$ and that the
corresponding canonical coframe field is given by
\[
 {\alpha''}^2_0 = -\frac{c-a}{2K} \varphi^2_1, \quad {\alpha''}^3_0 = \frac{c-a}{2K} \varphi^3_1.
  \]

\section{$L$-isothermic surfaces}\label{s:L-isotermic}
An important class of surfaces that belong to Laguerre geometry is that
of $L$-isothermic surfaces.

\begin{defn}
A nondegenerate Legendre immersion $f : S \to \La$, with canonical frame field $A$,
is called {\it $L$-isothermic}
if there exist local coordinates which simultaneously diagonalize
the definite pair of quadratic forms
$\langle dA_0,dA_0\rangle =(\alpha^2_0)^2+(\alpha^3_0)^2$ and
$\langle dA_0,dA_1\rangle =(\alpha^2_0)^2-(\alpha^3_0)^2$
and which are
isothermal with respect to $\langle dA_0,dA_0\rangle$.
\end{defn}
\begin{remark}
If $x : S \to \R^3$ is an immersed surface
without umbilic and parabolic points, oriented by the unit normal field $n$,
the $L$-isothermic condition amounts to the existence of
isothermal (conformal)
curvature line coordinates for the pair of quadratic forms $\mathrm{III}=dn \cdot dn$
and $\mathrm{II}=df\cdot dn$. Examples of $L$-isothermic surfaces include surfaces of revolution,
molding surfaces,
surfaces with plane lines of curvature \cite{MN-AMB}, minimal surfaces, etc.
The notion of $L$-isothermic surfaces is already in Bianchi \cite{Bi}, but
the systematic study of $L$-isothermic surfaces was initiated and developed by Blaschke \cite{Blaschke}.
\end{remark}

The following result was proved in \cite{MN-BOLL}.

\begin{thm}[\cite{MN-BOLL}]
A nondegenerate Legendre immersion $f : S \to \La$ is $L$-isothermic if and only if $p_2 =0$.
\end{thm}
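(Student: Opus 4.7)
Since the canonical coframe $(\alpha^2_0,\alpha^3_0)$ already simultaneously diagonalizes the forms $\langle dA_0,dA_0\rangle=(\alpha^2_0)^2+(\alpha^3_0)^2$ and $\langle dA_0,dA_1\rangle=(\alpha^2_0)^2-(\alpha^3_0)^2$, the $L$-isothermic condition reduces (after a possible relabelling $v\mapsto-v$ and/or use of the $\mathbb{Z}_2$ frame change \eqref{other-canonical}) to the existence of local coordinates $(u,v)$ and a positive function $\lambda$ with
\[
\alpha^2_0=\lambda\,du,\qquad \alpha^3_0=\lambda\,dv.
\]
Indeed, any coordinates that diagonalize both quadratic forms must render $\alpha^2_0$ and $\alpha^3_0$ proportional separately to the two coordinate differentials, and isothermality for $\langle dA_0,dA_0\rangle$ forces the two conformal factors to coincide in modulus. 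So the plan is to translate this existence into a single integrability equation on the canonical coframe and then read off the obstruction from \eqref{se2}.

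For the first step I would compute, using the structure equations \eqref{se0},
\[
d(\lambda^{-1}\alpha^2_0)=-\lambda^{-1}\bigl(d\ln\lambda+q_1\alpha^3_0\bigr)\wedge\alpha^2_0,\qquad d(\lambda^{-1}\alpha^3_0)=-\lambda^{-1}\bigl(d\ln\lambda-q_2\alpha^2_0\bigr)\wedge\alpha^3_0,
\]
so the two forms $\lambda^{-1}\alpha^2_0,\lambda^{-1}\alpha^3_0$ are simultaneously closed, and hence locally exact producing the desired coordinates $u,v$, if and only if
\[
d\ln\lambda=q_2\alpha^2_0-q_1\alpha^3_0.
\]
By the Poincar\'e lemma, a local positive solution $\lambda$ of this first-order system exists if and only if the $1$-form $\beta:=q_2\alpha^2_0-q_1\alpha^3_0$ is closed.

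For the last step I would compute $d\beta$ directly from the structure equations. The contribution $q_2\,d\alpha^2_0-q_1\,d\alpha^3_0$ vanishes because by \eqref{se0} each summand equals $q_1q_2\,\alpha^2_0\wedge\alpha^3_0$, so by \eqref{se2}
\[
d\beta=dq_2\wedge\alpha^2_0-dq_1\wedge\alpha^3_0=p_2\,\alpha^2_0\wedge\alpha^3_0.
\]
Hence $d\beta=0$ if and only if $p_2=0$, which yields the theorem in both directions: in the forward direction, the existence of the diagonalizing isothermal coordinates forces the identity for $d\ln\lambda$ above, so $\beta$ is exact and $p_2=0$; in the converse direction, $p_2=0$ lets one integrate $\beta$ to produce $\lambda$ and then integrate $\lambda^{-1}\alpha^2_0,\lambda^{-1}\alpha^3_0$ to produce $(u,v)$. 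There is no substantive obstacle; the crux is recognising the structure equation \eqref{se2} as precisely the $d^2=0$ compatibility condition for the first-order system that governs isothermal coordinates diagonalizing both invariant forms.
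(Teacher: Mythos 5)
Your proof is correct, and it is the natural structure-equation argument (the paper itself only cites \cite{MN-BOLL} for this theorem rather than reproducing a proof): the reduction of the definition to $\alpha^2_0=\lambda\,du$, $\alpha^3_0=\lambda\,dv$, the integrability condition $d\ln\lambda=q_2\alpha^2_0-q_1\alpha^3_0$, and the identification of $d\beta=p_2\,\alpha^2_0\wedge\alpha^3_0$ via \eqref{se2} are all accurate. As a consistency check, your key identity is exactly what the paper records in \eqref{iso1}, where $q_1=-e^{-u}u_y$ and $q_2=e^{-u}u_x$ for the Blaschke potential $\lambda=e^u$.
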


We first came across $L$-isothermic surfaces when addressing the question of applicability
of surfaces in Laguerre geometry. We recall the following.

\begin{defn}\label{d:applicable}
Let $f,\hat f : S \to \La$ be two nondegenerate Legendre immersions, with
respective canonical frame fields
$A$, $\hat A$ and canonical coframe fields $(\alpha^2_0, \alpha^3_0)$,
$(\hat\alpha^2_0, \hat\alpha^3_0)$. If $f$ and $\hat f$ are not $L$-equivalent, then
$f$ and $\hat f$ are {\it $L$-applicable}
on each other if $\alpha^2_0 = \pm \hat\alpha^2_0$, $\alpha^3_0 = \pm \hat\alpha^3_0$.
We say that $f: S \to \La$ is $L$-applicable if, for each $s\in S$, there exists an
open neighborhood $U$ of $s$, such that $f_{|U}$ is $L$-applicable on some $\hat f$.
\end{defn}

\begin{remark}
If $f =(x,n)$ and $\hat f = (\hat x, \hat n)$ are the Legendre lifts of two immersions
$x : S\to \R^3$ and $\hat x : S\to \R^3$, with unit normal fields $n$ and $\hat n$, respectively,
then $f$ and $\hat f$ are $L$-applicable if there exists a (local) orientation-preserving conformal
transformation between $(S,\,\mathrm{III}=dn\cdot dn)$ and $(S,\, \hat{\mathrm{III}}=d\hat n\cdot d\hat n)$
that preserves the curvature lines
of $x$ and $\hat x$.
\end{remark}

\begin{remark}\label{rk:def-pbm}
The above definition is related to the general deformation theory of submanifolds in
homogeneous spaces as developed by Fubini, Cartan, Griffiths and Jensen (cf. \cite{Ca1, Gr, J}).
In fact, it has been proved that two Legendre immersions $f$ and
$\hat f$ are $L$-applicable if and only if there exists a smooth map $B : S \to L$ with the property
that $B(s) \hat f$ and $f$ agree up to order two at $s$, for each $s\in S$. In other words, if
and only if $f$ and $\hat f$
are second order deformations of each other with respect to the Laguerre group.
For the proof of this result and for more details on the deformation problem of
surfaces in Laguerre geometry we refer to \cite{MN-BOLL} and \cite{MN-TOH}.
\end{remark}

The notion of $L$-applicability introduced in Definition \ref{d:applicable} is the analogue, in Laguerre geometry,
of the following
notion of applicability in Euclidean geometry: two noncongruent (relative to rigid motions) immersions
$x, \hat x: S \to \R^3$ are
applicable if they are connected by a (local) isometry that preserves the lines of curvature. It is known that
{\it the only applicable surfaces in this sense
are the molding surfaces}.\footnote{We recall (cf. \cite{BCG,BCGGG}) that a molding surface $S$ can
be described kinematically as follows: Take a cylinder $Z$ and a curve $C$ on one of the tangent
planes to $Z$. The surface $S$ is the locus described by $C$ as the plane rolls about $Z$.
Among the molding surfaces there are the surfaces of revolution.}
{\it Actually, such surfaces belong to a 1-parameter family of noncongruent surfaces, which are connected
by isometries preserving the
lines of curvature}. For more on molding surfaces and for the proof of this result we refer
to \cite[\S5, Theorem 5.1]{BCG} or \cite[Chapter IV, Theorem 8.1]{BCGGG}.
The above notion of $L$-applicability is also related to the
notion of applicability in M\"obius geometry: two noncongruent (relative to M\"obius transformations) immersions
$x, \hat x: S \to \R^3\cup\{\infty\}$ are
$M$-applicable if they are connected by a (local) conformal transformation that preserves the lines of curvature.
{\it The only $M$-applicable surfaces
are the isothermic surfaces}.\footnote{We recall that a surface is
isothermic if it admits conformal curvature line coordinates away from umbilic points.}
{\it Also in this case, such surfaces belong to a 1-parameter family of noncongruent surfaces, which are connected
by conformal transformations preserving the
lines of curvature} (cf. \cite{Mu-TS}). For more on isothermic surfaces in M\"obius geometry, we refer
to \cite{HJlibro} and \cite{JMN}.

\vskip0.1cm
$L$-applicable surfaces can be characterized in a similar way.

\begin{thm}[\cite{MN-BOLL}]\label{thm:L-applicable}
The $L$-isothermic immersions are the only Legendre immersions that are $L$-applicable.
Moreover, any $L$-isothermic immersion belongs to a 1-parameter family of non-equivalent $L$-applicable
Legendre immersions.
\end{thm}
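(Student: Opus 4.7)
The strategy is to translate $L$-applicability into an equality of canonical invariants with a single free function, derive a linear first-order equation for that function, and read off the integrability obstruction from the Maurer--Cartan equations on $L$. I begin by normalizing. Suppose $f,\hat f$ are $L$-applicable nondegenerate Legendre immersions, with canonical frame fields $A,\hat A$. The $\mathbb Z_2$ freedom \eqref{other-canonical} (which multiplies both $\alpha^2_0$ and $\alpha^3_0$ by $-1$), together with the orientation of $S$, allows me to arrange $\hat\alpha^2_0=\alpha^2_0$ and $\hat\alpha^3_0=\alpha^3_0$. The structure equations \eqref{se0}--\eqref{se2} then force $\hat q_i=q_i$, $\hat p_2=p_2$, and $\hat p_3-\hat p_1=p_3-p_1$, leaving a single unknown $u:=\hat p_1-p_1=\hat p_3-p_3$.

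Subtracting \eqref{se3} for $f$ from that for $\hat f$, and similarly \eqref{se4}, and using the identity $\alpha^1_1=2q_2\alpha^2_0-2q_1\alpha^3_0$ from \eqref{alfa11}, the two resulting two-forms collapse into the single scalar equation
\[
 du=-u\,\alpha^1_1.
\]
Because $f$ and $\hat f$ are assumed non-equivalent, $u$ is not identically zero, so the integrability condition $d(du)=0$ forces $d\alpha^1_1=0$. Computing $d\alpha^1_1$ from $d\omega^1_1=-\omega^1_k\wedge\omega^k_1$ on the canonical frame, using the normalizations $\alpha^2_1=\alpha^2_0,\;\alpha^3_1=-\alpha^3_0$ and the expressions \eqref{alfa12-13} for $\alpha^1_2,\alpha^1_3$, yields
\[
 d\alpha^1_1=2p_2\,\alpha^2_0\wedge\alpha^3_0.
\]
Hence $p_2=0$, i.e.\ $f$ is $L$-isothermic by the preceding theorem.

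Conversely, if $p_2=0$ then $\alpha^1_1$ is closed and locally $\alpha^1_1=-d\psi$, so $du=-u\,\alpha^1_1$ admits the 1-parameter family of solutions $u=Ce^{\psi}$, $C\in\R\setminus\{0\}$. Defining the candidate invariants $\hat q_i:=q_i$, $\hat p_2:=0$, $\hat p_1:=p_1+u$, $\hat p_3:=p_3+u$, a direct substitution (reversing the manipulations of the previous paragraph) verifies the full set of structure equations \eqref{se0}--\eqref{se4} for the new data. An appeal to the integrability/reconstruction theorem for the Laguerre Pfaffian system of Section~\ref{ss:LPS} then produces a nondegenerate Legendre immersion $\hat f$ with these invariants, unique up to $L$-equivalence. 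Distinct nonzero values of $C$ yield non-equivalent $\hat f$, giving the claimed 1-parameter family of $L$-applicable partners of $f$.

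The main obstacle is the clean Maurer--Cartan computation $d\alpha^1_1=2p_2\,\alpha^2_0\wedge\alpha^3_0$: several terms appear in $-\omega^1_k\wedge\omega^k_1$ on the canonical frame, and tracking the cancellations that leave precisely $p_2$ requires care. A secondary technical point is the reconstruction of $\hat f$ from prescribed invariants, which I would invoke from the Pfaffian-system framework of Section~\ref{ss:LPS} rather than reprove a Frobenius-type existence statement from scratch.
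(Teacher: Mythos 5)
Your argument is correct, and the central computation checks out: on the canonical frame, $d\alpha^1_1=-\alpha^1_2\wedge\alpha^2_1-\alpha^1_3\wedge\alpha^3_1=2p_2\,\alpha^2_0\wedge\alpha^3_0$, and the difference of the two sets of structure equations \eqref{se3}--\eqref{se4} does collapse, via \eqref{alfa11}, to the single linear equation $du=-u\,\alpha^1_1$ for $u=\hat p_1-p_1=\hat p_3-p_3$. Your route is, however, genuinely different in presentation from the one the survey gives: the paper cites \cite{MN-BOLL} for the first assertion and obtains the $1$-parameter family by passing to isothermal curvature-line coordinates, introducing the Blaschke potential $\Phi=e^u$, showing that the right-hand side of \eqref{dW} is a closed form $\eta_\Phi$, and exhibiting the explicit Maurer--Cartan matrices $\alpha^{(m)}$ with $\text{\sc w}_m=\text{\sc w}+me^{-2u}$ as in \eqref{J-L-m}. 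Your coordinate-free derivation is equivalent (in isothermic coordinates $\alpha^1_1=2du$, so your solutions $Ce^{\psi}$ become $me^{-2u}$ and $C$ is the spectral parameter $m$), and it has the advantage of isolating the obstruction $d\alpha^1_1=2p_2\,\Omega_f$ invariantly and of proving the ``only $L$-isothermic surfaces are applicable'' direction, which the survey does not reprove; what it gives up is the explicit link to the Blaschke equation \eqref{blaschke-eq} and the integrable-systems picture.

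Two small points you should make explicit. First, the conclusion $d\alpha^1_1=0$ follows from $0=d(du)=-u\,d\alpha^1_1$ only where $u\neq 0$; you need to observe that the linear equation $du=-u\,\alpha^1_1$ forces $u$ to be either identically zero or nowhere zero on connected $S$ (so non-equivalence gives $p_2\equiv 0$ everywhere, and also guarantees that each member of your family with $C\neq 0$ has $\hat p_1+\hat p_3\neq p_1+p_3$ at every point, hence is nowhere $L$-equivalent to $f$). Second, Definition \ref{d:applicable} allows independent signs $\alpha^2_0=\pm\hat\alpha^2_0$, $\alpha^3_0=\pm\hat\alpha^3_0$, whereas the residual freedom \eqref{other-canonical} only flips both simultaneously; you should justify that the mixed-sign case is excluded by requiring canonical coframes to be positively oriented on the oriented surface $S$ (or treat it separately by the same method). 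Neither point affects the validity of the approach.
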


For a given $L$-isothermic immersion, we will now describe the nontrivial 1-parameter family
of its $L$-applicable immersions.
If $f: S \to \La$ is $L$-isothermic, there are local
curvature line coordinates $z= x+iy$
such that the canonical coframe
$(\alpha^2_0, \alpha^3_0)$ takes the form
\[
 \alpha^2_0= e^u dx,\quad \alpha^3_0 = e^u dy,
  \]
for a smooth function $u$ on $S$.
We call $\Phi =e^u$ the {\it Blaschke potential} of $f$.

Accordingly, from \eqref{se0}, \eqref{se1} and \eqref{se2} it follows that
\begin{eqnarray}
  q_1= -e^{-u}u_y, & {} &q_2= e^{-u}u_x, \label{iso1} \\
   p_1 -p_3 &=& - e^{-2u} \Delta u \label{iso2}.
    \end{eqnarray}
Moreover, using \eqref{se3} and \eqref{se4} yields
\begin{equation}\label{dW}
 \aligned
  d\left(e^{2u}(p_1 +p_3)\right) &= -e^{2u}\left\{\left(e^{-2u}\Delta u\right)_x
   +4u_x(e^{-2u}\Delta u)\right\} dx \\
   &\quad +
    e^{2u}\left\{(e^{-2u}\Delta u)_y
     +4u_y(e^{-2u}\Delta u)\right\}dy.
  \endaligned
     \end{equation}
The integrability condition of \eqref{dW} is the so-called {\it Blaschke equation},
\begin{equation}\label{blaschke-eq}
 \Delta \left(e^{-u}(e^u)_{xy}\right) =0,
  \end{equation}
which can be viewed as the completely integrable (soliton)
equation governing $L$-iso\-thermic surfaces (cf. \cite{MN-BUD, MN-AMB}).

Conversely, let $U$ be a simply connected domain in $\C$, and let $\Phi = e^u$ be
a solution to the Blaschke equation \eqref{blaschke-eq}. It follows
that the right hand side of \eqref{dW} is a closed 1-form, say $\eta_\Phi$.
Thus, $\eta_\Phi = d\mathcal K$, for some function $\mathcal K$ determined up to an
additive constant.
If we let
\begin{equation}\label{J-L}
    \text{\sc w} = \mathcal Ke^{-2u}, \quad
   \text{\sc j} = -\frac{1}{2}e^{-2u}\Delta u
   \end{equation}
(cf. \eqref{mixed-invariants} for the definition of $ \text{\sc w}$ and $\text{\sc j}$), the 1-form defined by
\[
\alpha =
 \begin{bmatrix}
0 & 0 &  0  &  0  &  0  & 0 \\
0 &2du&(\text{\sc w}+\text{\sc j})e^udx&(\text{\sc w}-\text{\sc j})e^udy&0 & 0 \\
e^udx  &e^udx& 0 &u_ydx - u_xdy &(\text{\sc w}+\text{\sc j})e^udx & 0 \\
e^udy  &-e^udy&-u_ydx +u_xdy&0&(\text{\sc w}-\text{\sc j})e^udy & 0\\
0 & 0&e^udx&-e^udy&-2du & 0 \\
0  &  0 & e^udx &  e^udy & 0 & 0
\end{bmatrix}
\]
satisfies the Maurer--Cartan integrability condition
$$d\alpha +\alpha\wedge\alpha = 0$$
and then integrates to a
map $A=  U \to L$, such that $dA = A\alpha$.
The map $f : U \to \La$ defined by
$$f= \lambda(A_0,A_1)$$
is a smooth Legendre immersion and $A$
is a canonical frame field along $f$. Thus, $f$ is an $L$-isothermic
immersion (unique up to Laguerre equivalence) and $\Phi$ is its Blaschke potential.

If, for any $m\in \R$, we let
\begin{equation}\label{J-L-m}
    \text{\sc w}_m = \text{\sc w}+me^{-2u}, \quad
   \text{\sc j}_m = \text{\sc j}  = -\frac{1}{2}e^{-2u}\Delta u,
   \end{equation}
then the 1-form defined by
\[
\alpha^{(m)}=
 \begin{bmatrix}
0 & 0 &  0  &  0  &  0  & 0 \\
0 &2du&(\text{\sc w}_m+\text{\sc j}_m)e^udx&(\text{\sc w}_m-\text{\sc j}_m)e^udy&0 & 0 \\
e^udx  &e^udx& 0 &u_ydx - u_xdy &(\text{\sc w}_m+\text{\sc j}_m)e^udx & 0 \\
e^udy  &-e^udy&-u_ydx +u_xdy&0&(\text{\sc w}_m-\text{\sc j}_m)e^udy & 0\\
0 & 0&e^udx&-e^udy&-2du & 0 \\
0  &  0 & e^udx &  e^udy & 0 & 0
\end{bmatrix}
\]
satisfies the Maurer--Cartan integrability condition
$$d\alpha^{(m)} +\alpha^{(m)}\wedge\alpha^{(m)} = 0,$$
so that there exists a
smooth map $A^{(m)} : U \to L$, such that $dA^{(m)} = A^{(m)}\alpha^{(m)}$.
The map $f_m : U \to \La$, given by $f_m= \lambda(A_0^{(m)},A_1^{(m)})$,
is a smooth Legendre immersion and $A^{(m)}$
is a canonical frame field along $f_m$. Thus, $f_m$ is an $L$-isothermic
immersion (unique up to
Laguerre equivalence) with the same Blaschke potential $\Phi$ as $f =f_0$.
Then there exist a 1-parameter family of non-equivalent $L$-isothermic immersions $\{f_m\}$
with the same Blaschke potential $\Phi$. This family amounts to the 1-parameter family of Legendre
immersions that are
$L$-applicable on $f$ (cf. Theorem \ref{thm:L-applicable}).

Actually, any other nondegenerate $L$-isothermic immersion having $\Phi$
as Blaschke potential is Laguerre equivalent to $f_m$, for some $m\in \R$.

\begin{defn}
Two $L$-isothermic immersions
$f, \tilde f$ which are not Laguerre equivalent are said to
be $T$-{\it transforms} ({\it spectral deformations}) of each other
if they have the same Blaschke potential.
\end{defn}

\begin{remark}
The spectral family $f_m$ describes all $T$-transforms
of $f= f_0$. In fact, any nondegenerate $T$-transform of $f$ is Laguerre
equivalent to $f_m$, for some $m\in \R$.
Such a 1-parameter family of $L$-isothermic surfaces amounts to the family of
second order Laguerre deformations of $f$ in the sense of Remark \ref{rk:def-pbm}.
\end{remark}

\section{$L$-minimal surfaces}\label{s:L-minimal}

Let $f : S \to \La$ be a Legendre surface. A
compactly supported variation of $f$ is a differentiable mapping
\[
V : S \times (-\epsilon,\epsilon) \to \La, \, (s,t) \longmapsto V(s,t), \quad \text{for some}\,\,\epsilon > 0,
    \]
such that its restriction $f_t$ to $S \times\{t\}$, $t \in(-\epsilon,\epsilon)$, is a Legendre surface, $f_0 = f$, and
such that there exists a compact domain $K \subset S$ for which $V (s,t) = f(s)$, for every
$s \in S \setminus K$ and every $t \in(-\epsilon,\epsilon)$. If $f$ is nondegenerate we may suppose that $f_t$ is also
nondegenerate, for each $t \in(-\epsilon,\epsilon)$.
Given a compact domain $C \subset S$, we define the functional
\begin{equation}\label{L-functional}
  \mathcal W_C (f) =\int_C \Omega_f
     \end{equation}
on the space of smooth Legendre immersions $f : S \to \La$. We call \eqref{L-functional} the Weingarten functional.
A Legendre immersion $f  : S \to \La$ is called $L$-minimal if it is
an extremal of \eqref{L-functional}, that is, if for any compact domain $C \subset S$ and any
differentiable variation $f_t$ with support in $C$ we have
\[
\frac{d}{dt} (\mathcal W_C (f_t)) \Big|_{t=0} = 0.
   \]

The following result holds true.

\begin{thm}[\cite{MN-TAMS}]
A nondegenerate Legendre immersion $f :S \to \La$ is $L$-minimal if and only if $p_1 +p_3 =0$.
\end{thm}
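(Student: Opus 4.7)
The plan is to compute the first variation of the Weingarten functional $\mathcal W_C$ directly via moving frames. Let $f_t$ be a variation of $f=f_0$ with support in a compact set $K\subset C$, and lift each $f_t$ to its canonical frame field $A_t : S \to L$ (smoothly in $t$, possibly after passing to the $\mathbb Z_2$-cover described in Remark \ref{r:global-can-fr}); this yields a smooth map $A : S \times (-\epsilon,\epsilon) \to L$. Pulling back the Maurer--Cartan form of $L$ produces 1-forms $\alpha^i_j = A^\ast(\omega^i_j)$ on $S \times (-\epsilon,\epsilon)$. Since $A_t$ is canonical for each $t$, the identities $\alpha^1_0 = \alpha^4_0 = 0$, $\alpha^2_1 - \alpha^2_0 = 0$, $\alpha^3_1 + \alpha^3_0 = 0$ hold on every slice $S\times\{t\}$; hence on $S \times (-\epsilon,\epsilon)$ there exist smooth functions $\rho,\beta,\gamma_1,\gamma_2$ with
\[
\alpha^1_0 = \rho\,dt,\quad \alpha^4_0 = \beta\,dt,\quad \alpha^2_1 - \alpha^2_0 = \gamma_1\,dt,\quad \alpha^3_1 + \alpha^3_0 = \gamma_2\,dt.
\]

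The key step is to compute the 3-form $d\Omega$, where $\Omega := \alpha^2_0 \wedge \alpha^3_0$ is the natural 2-form on $S \times (-\epsilon,\epsilon)$ restricting to $\Omega_{f_t}$ on each slice. Using the Maurer--Cartan equations $d\omega^i_j = -\omega^i_k\wedge\omega^k_j$ and the explicit shape of the Laguerre Lie algebra ($\omega^2_3 = -\omega^3_2$, $\omega^2_4 = \omega^1_2$, $\omega^3_4 = \omega^1_3$, $\omega^0_0=\omega^5_0=\omega^2_5=0$, etc.), I obtain
\[
 d\alpha^2_0 = -\alpha^2_1\wedge\alpha^1_0 + \alpha^3_2\wedge\alpha^3_0 - \alpha^1_2\wedge\alpha^4_0, \quad d\alpha^3_0 = -\alpha^3_1\wedge\alpha^1_0 - \alpha^3_2\wedge\alpha^2_0 - \alpha^1_3\wedge\alpha^4_0.
\]
Substituting the expressions for $\alpha^1_0,\alpha^4_0$ above and the slice identities $\alpha^1_2 \equiv p_1\alpha^2_0 + p_2\alpha^3_0$ and $\alpha^1_3 \equiv p_2\alpha^2_0 + p_3\alpha^3_0 \pmod{dt}$, then expanding $d\Omega = d\alpha^2_0\wedge\alpha^3_0 - \alpha^2_0\wedge d\alpha^3_0$, I expect the $\rho$-contributions from the two wedges to cancel exactly and the $p_2$-terms to drop out by antisymmetry of wedge, leaving
\[
 d\Omega = (p_1 + p_3)\,\beta\,\alpha^2_0\wedge\alpha^3_0\wedge dt.
\]

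To conclude, let $i_t : S \to S\times(-\epsilon,\epsilon)$ denote the slice inclusion, so that $\mathcal W_C(f_t) = \int_C i_t^\ast\Omega$. Applying Cartan's magic formula $\mathcal L_{\partial_t}\Omega = d\iota_{\partial_t}\Omega + \iota_{\partial_t}d\Omega$, together with Stokes' theorem and the compact support of the variation (which kills the boundary term $\int_C d(i_0^\ast\iota_{\partial_t}\Omega)$), will yield
\[
 \frac{d}{dt}\bigg|_{t=0}\mathcal W_C(f_t) = \int_C (p_1+p_3)\,\beta|_{t=0}\,\Omega_f.
\]
The hard part will be to argue that $\beta|_{t=0}$ exhausts a rich enough class of functions, namely all compactly supported smooth functions on $S$. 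This follows from the general theory of deformations of Legendre submanifolds of contact manifolds: the contact structure on $\La$ is defined by $\theta^4_0$ (cf. the Remark preceding Section \ref{s:nondeg-Leg-imm}), and Legendre variations of the $2$-dimensional Legendrian $f(S)$ in the $5$-dimensional contact manifold $\La$ are parametrized, via the contact Hamiltonian formalism, by a single arbitrary smooth function on $S$---which in our setup is precisely $\beta|_{t=0} = \alpha^4_0(\partial_t)|_{t=0}$. Granted this parametrization, the fundamental lemma of the calculus of variations forces $p_1 + p_3 \equiv 0$ as the Euler--Lagrange equation, proving the theorem.
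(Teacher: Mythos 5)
Your computation is correct, and I have checked it line by line: with the conventions of the paper one indeed has $d\omega^2_0=-\omega^2_1\wedge\omega^1_0+\omega^3_2\wedge\omega^3_0-\omega^1_2\wedge\omega^4_0$ and $d\omega^3_0=-\omega^3_1\wedge\omega^1_0-\omega^3_2\wedge\omega^2_0-\omega^1_3\wedge\omega^4_0$, the $\rho$-terms cancel because $\alpha^2_1\equiv\alpha^2_0$ and $\alpha^3_1\equiv-\alpha^3_0$ modulo $dt$, the $p_2$-terms drop out, and one is left with $d(\alpha^2_0\wedge\alpha^3_0)=(p_1+p_3)\,\beta\,\alpha^2_0\wedge\alpha^3_0\wedge dt$. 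Note, however, that the survey you are reading does not actually prove this theorem: it is quoted from \cite{MN-TAMS} and then used as input in the proof of Theorem \ref{thm:l-minimal-minimal}. So your argument is not a reproduction of a proof in the text but a self-contained first-variation computation. It is fully consistent with what the paper does prove later: since $2\mathbf H=(p_1+p_3)A_1$ and $\langle A_1,\partial_t A_0\rangle=-\alpha^4_0(\partial_t)=-\beta$, your formula $\delta\mathcal W=\int_C(p_1+p_3)\beta\,\Omega_f$ is exactly the first variation of area of the spacelike immersion $\sigma_f=[A_0]$ in $\R^{3,1}$ in the null normal direction $A_1$, which is the conceptual content of Section \ref{s:L-minimal}.

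Two points deserve more care than you give them. First, the smooth $t$-dependence of the canonical frame: the frame reductions of \S\ref{ss:construction} are given by explicit smooth formulas in the $h_{ij}$ and $a_i$ (using nondegeneracy of each $f_t$, which the paper's definition of variation allows you to assume), so a smooth local choice exists, and since $p_1+p_3$, $\alpha^4_0$ and $\Omega$ are all invariant under the residual $\mathbb Z_2$, the integrand is globally well defined; you should say this rather than only gesture at the $2{:}1$ cover. Second, and this is the genuine crux you correctly flag, the surjectivity of $\beta|_{t=0}$ onto compactly supported functions. This is true but should be argued: since the fundamental lemma is local, work on a neighborhood where $f$ is an embedded Legendre surface, extend a given bump function $h$ to a compactly supported function $H$ on $\La$, and flow along the contact vector field of $H$ with respect to the contact form $\theta^4_0$; the resulting compactly supported Legendre variation has $\theta^4_0(\partial_tV)|_{t=0}=H\circ f$, and $\alpha^4_0(\partial_t)$ differs from $V^\ast\theta^4_0(\partial_t)$ only by the nowhere-vanishing scale $d_2$ relating the canonical frame to a fixed local section of $\pi_L$ (cf. \eqref{transf-rule-coframe}), which is harmless for the fundamental lemma. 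With these two points made explicit, the proof is complete, and it has the virtue over a bare citation of \cite{MN-TAMS} of exhibiting $p_1+p_3=0$ directly as the Euler--Lagrange equation of $\int\Omega_f$.
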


\begin{remark}
If $f =(x, n) : S\to \La$ is the Legendre lift of an immersed surface $x : S \to \R^3$, oriented
by a unit normal field
$n : S \to S^2$, the functional \eqref{L-functional} coincides with the Weingarten functional
\[
 \mathcal W(S,x) = \int \frac{H^2 -K}{K} dA,
  \]
where $H$ and $K$ are the
mean and Gauss curvatures of $x$ and $dA$ is the induced area element of the surface.
In this case,
using the computations made in \S \ref{ss:euclidean-inv}, one can write the invariant functions
$p_1$ and $p_3$ in terms of the Euclidean invariants and
show (cf. e.g. \cite{MN-TAMS}) that the condition to be $L$-minimal is expressed by
the nonlinear fourth-order elliptic PDE
\begin{equation}\label{euler-lagrange-eq}
 \Delta^{\mathrm{III}}\left(\frac{H}{K} \right) = 0,
  \end{equation}
where $\Delta^{\mathrm{III}}$ denotes the Laplace--Beltrami operator with respect to the third fundamental form
$\mathrm{III}$ of the immersion $x$.
\end{remark}

Nondegenerate $L$-minimal surfaces are characterized by the
minimality
of the Laguerre Gauss map.

\begin{thm}[\cite{Blaschke}, \cite{MN-TAMS}]\label{thm:l-minimal-minimal}
A nondegenerate Legendre immersion $f :S \to \La$ is $L$-minimal if and only if its Laguerre Gauss map
$\sigma_f :S \to \mathcal Q_\Sigma \cong \R^{3,1}$ has zero mean curvature vector.
\end{thm}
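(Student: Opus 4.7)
The plan is to compute the mean curvature vector of the Laguerre Gauss map $\sigma_f:S\to\R^{3,1}$ directly from a canonical frame field $A:S\to L$, via the identification $\mathcal Q_\Sigma\cong\R^{3,1}$, $V\mapsto[\mathfrak a(V)]$. The matrix representation \eqref{matrix-repr-lag-el} shows that any Laguerre frame, in particular any canonical frame, satisfies $A_0=\mathfrak a(V)$ for a map $V=\sigma_f:S\to\R^{3,1}$, together with $(A_j)^0=0$ for $j=1,2,3,4$. Writing $A_j={}^t(0,\widetilde A_j,(A_j)^5)$ for these indices and setting $V_i:=K^{-1}\widetilde A_i$ ($i=2,3$), $N_\alpha:=K^{-1}\widetilde A_\alpha$ ($\alpha=1,4$), the relation $d\mathfrak a(V)={}^t(0,K\,dV,(V,dV))$ combined with $dA_0=\alpha^2_0 A_2+\alpha^3_0 A_3$ yields $dV=\alpha^2_0 V_2+\alpha^3_0 V_3$, so $V_2,V_3$ span the tangent space of $\sigma_f(S)$ in $\R^{3,1}$. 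The Lorentz relations $\langle A_i,A_j\rangle=\eta_{ij}$ translate into $(V_i,V_j)=\delta_{ij}$, $(V_i,N_\alpha)=0$, $(N_1,N_1)=(N_4,N_4)=0$ and $(N_1,N_4)=-1$, so $(V_2,V_3)$ is an orthonormal spacelike tangent frame of $\sigma_f(S)$ and $(N_1,N_4)$ is a null basis of its normal bundle.

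The second fundamental form is then obtained by reading off the middle four rows of the canonical structure equations \eqref{dA=Aalpha} for $dA_2$ and $dA_3$ and applying $K^{-1}$ (using $\widetilde A_5=0$):
\[
dV_2=\alpha^3_2 V_3+\alpha^1_2 N_1+\alpha^2_0 N_4,\qquad dV_3=-\alpha^3_2 V_2+\alpha^1_3 N_1-\alpha^3_0 N_4.
\]
Substituting $\alpha^1_2=p_1\alpha^2_0+p_2\alpha^3_0$ and $\alpha^1_3=p_2\alpha^2_0+p_3\alpha^3_0$ from \eqref{alfa12-13} and extracting the normal parts gives
\[
\mathrm{II}(V_2,V_2)=p_1 N_1+N_4,\qquad \mathrm{II}(V_2,V_3)=p_2 N_1,\qquad \mathrm{II}(V_3,V_3)=p_3 N_1-N_4,
\]
so the mean curvature vector of $\sigma_f$ is $\vec H=\tfrac12(p_1+p_3)\,N_1$. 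Since $N_1$ cannot vanish (otherwise the pairing $(N_1,N_4)=-1$ would fail), $\vec H\equiv 0$ if and only if $p_1+p_3=0$, which by the previous theorem is equivalent to $f$ being $L$-minimal.

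The only nontrivial step, and therefore the main obstacle, is setting up the correct dictionary between the $\R^{4,2}$-valued canonical frame $A$ and the tangent/normal decomposition of $\sigma_f(S)\subset\R^{3,1}$: once one recognizes that the columns $A_2,A_3$ of a canonical frame descend via $K^{-1}$ to an orthonormal tangent frame and $A_1,A_4$ to a null normal frame, everything else follows from a direct reading of the structure equations of the canonical frame together with the explicit parameterization $\mathfrak a(V)$ of $\mathcal Q_\Sigma$.
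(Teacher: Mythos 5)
Your proof is correct and follows essentially the same route as the paper: both identify $A_2,A_3$ (resp.\ $A_1,A_4$) of the canonical frame with an orthonormal tangent frame (resp.\ a null normal frame) of $\sigma_f$ in $\R^{3,1}$, read the second fundamental form off the structure equations \eqref{dA=Aalpha} together with \eqref{alfa12-13}, and conclude $2\mathbf H=(p_1+p_3)A_1$, so that the claim reduces to the characterization of $L$-minimality by $p_1+p_3=0$. The only difference is that you make the isometry $\mathcal Q_\Sigma\cong\R^{3,1}$ explicit via $K^{-1}$ and $\mathfrak a(V)$, where the paper works directly with the frame vectors as sections of $\sigma_f^\ast(T\R^{3,1})$.
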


\begin{proof}
According to Remark \ref{rk:sigma-f-spacelike},
the Laguerre Gauss map $\sigma_f: S \to \R^{3,1}$ is a spacelike immersion
with induced metric $\Phi_f$
and relative induced area element $\Omega_f$. In particular,
\[
 \mathcal W_K (f) =\int_K \Omega_f = \text{Area}(\sigma_f).
\]
From this it is clear that if $\sigma_f$ has zero mean curvature vector, then $f$ must be $L$-minimal.
For the necessity condition, we need to compute the mean curvature vector of $\sigma_f$.
The canonical frame field $A$ along $f$ is adapted to the Laguerre Gauss map
$\sigma_f$. In fact,
the bundle induced by $\sigma_f$ over $S$ splits into the direct sum
\[
 \sigma_f^\ast (T \R^{3,1}) = T(\sigma_f) \oplus N(\sigma_f),
  \]
where $T(\sigma_f) =\span\{A_2, A_3\}$ is the tangent bundle
and $N(\sigma_F) =\span\{A_1, A_4\}$ is the normal bundle of $\sigma_f$.
The metric induced by
$\sigma_f$ on $S$, is
\[
 \Phi_f=\langle d\sigma_f, d\sigma_f\rangle
  = (\alpha^2_0)^2 +(\alpha^3_0)^2,
  \]
and $\alpha^2_0$, $\alpha^3_0$ defines an orthonormal coframe field on $S$.
As $d\sigma_f(TS) = \span\{A_2, A_3\}$,
it follows from \eqref{lframe2a} that
\[
  \alpha^1_0 = 0 = \alpha^4_0.
  \]
Differentiating these equations yields
\[
\aligned
 0 &= d\alpha^1_0 = -\alpha^1_2 \wedge\alpha^2_0 - \alpha^1_3\wedge\alpha^3_0,\\
 0 &= d\alpha^4_0 = -\alpha^4_2 \wedge\alpha^2_0 - \alpha^4_3\wedge\alpha^3_0.
\endaligned
\]
Then, by Cartan's Lemma,
\[
 \alpha^{\nu}_i = h^{\nu}_{i2}\alpha^2_0 + h^{\nu}_{i3}\alpha^3_0,
 \quad  h^{\nu}_{ij}=h^{\nu}_{ji} \quad \nu=1,4;\, i,j =2,3,
  \]
where the functions $h^{\nu}_{ij}$ are the components of the
second fundamental form of $\sigma_f$,
\[
 \Pi = \sum_{i,j=2,3} h^1_{ij}\alpha^i_0\alpha^j_0\otimes a_4
 + \sum_{i,j=2,3} h^4_{ij}\alpha^i_0\alpha^j_0 \otimes a_1.
   \]
From \eqref{alfa21-31}, \eqref{alfa12-13} and the symmetry relations \eqref{str-eq1}, it follows that
\[
  (h^1_{ij})= \begin{bmatrix} 1& 0\\0& -1 \end{bmatrix},
  \quad  (h^4_{ij})= \begin{bmatrix} p_1& p_2\\p_2&p_3 \end{bmatrix}.
   \]
The mean curvature vector $\mathbf{H}$ of $\sigma_f$ is half the
trace of $\Pi$
with respect to $\Phi_f$,
i.e.,
\begin{equation}\label{mean-curv-vector}
  2 \mathbf{H} = (p_1 + p_3) A_1.
  \end{equation}
Thus, $\mathbf{H}\equiv 0$ on $S$ if and only if
$p_1+ p_3$ vanishes identically on $S$.
\end{proof}

\begin{remark}
(1) In the presence of umbilical points, one can prove that $\sigma_f$ is a conformal harmonic map.
(2) The necessity condition in Theorem \ref{thm:l-minimal-minimal} is sort of surprising since
there are
more variations of the map $\sigma_f$ than there are variations through Laguerre Gauss maps.
(3) Note that $\mathbf{H}$ is a null section of the
normal bundle $N(\sigma_f)$, i.e., $\langle\mathbf{H}, \mathbf{H} \rangle=0$.
In particular, it follows from \eqref{mean-curv-vector} that $\sigma_f$ is a marginally outer trapped
surface (MOTS) in $\R^{3,1}$ (cf. \cite{CGP-BAMS, HE} for more details on MOTS and \cite{MNjgp}
for the analogous situation in M\"obius geometry).
(4) With respect to the null frame field $\{A_1, A_4\}$,
the normal connection $\nabla^\perp$ in the normal bundle $N(\sigma)$
of $\sigma$ is
given by
\[
 \nabla^\perp a_1 =  \alpha^1_1 \otimes a_1, \quad
  \nabla^\perp a_4 =  -\alpha^1_1 \otimes a_4.
  \]
In particular, we have
\begin{equation}\label{H-normal-deriv}
 2\nabla^\perp \mathbf{H} =\left[d(p_1 +p_3) + (p_1 +p_3) \,\alpha^1_1\right] a_1,
   \end{equation}
so that the parallel condition $\nabla^\perp \mathbf{H} = 0$ takes the form
\begin{equation}\label{gLm-eq}
 d(p_1+p_3) + 2(p_1+p_3)(q_2\alpha^2_0 -q_1 \alpha^3_0) = 0.
  \end{equation}
\end{remark}


\section{The Pfaffian system of $L$-minimal surfaces}\label{s:Pfaffian-sys}

In this section we introduce the Pfaffian differential systems of $L$-minimal surfaces and
prove that it is in involution. Then,
as an application of the general Cartan--K\"ahler theorem for exterior differential
systems in involution we study the Cauchy problem for $L$-minimal surfaces
and prove the existence of a unique real analytic $L$-minimal surface passing through
a given real analytic integral curve of the system.
The functional dependence of the initial curve amounts to the choice of four arbitrary
functions in one variable. For the study of the Cauchy problem in other geometric situations
we refer to \cite{JMN-JPA, MNjmp, MNphysD}.

As shown above, $L$-minimal surfaces are characterized by the condition $p_1 +p_3 =0$. Hence, by reasoning as
in \S \ref{ss:LPS}, they can be interpreted as integral manifolds of the differential system obtained by
restricting the Laguerre differential system $(\mathcal I,\Omega)$ to the submanifold
\[
 Y = \left\{(A,q_1,q_2, p_1, p_2, p_3)\in L\times \mathbb R^5 \,: \, p_1 +p_3 =0\right\} \subset M.
  \]

We call $(Y,\mathcal I,\Omega)$ the {\it Pfaffian system of $L$-minimal surfaces}.
We shall identify $Y$ and $L\times \mathbb R^4$ and
denote by $(q_1,q_2, p_1, p_2)$ the points of $\mathbb R^4$. In this way, the system $(\mathcal I,\Omega)$
on $Y$ is
differentially generated by the 1-forms
\[
\begin{aligned}
&\eta^1 = \omega^4_0, \quad \eta^2 = \omega^1_0, \quad \eta^3 = \omega^2_1-\omega^1, \quad \eta^4 = \omega^3_1+\omega^1,\\
&\eta^5 = \omega^3_2 -q_1\omega^1 -q_2\omega^2, \quad \eta^6 = \omega^1_1 -2q_2\omega^1 +2q_1\omega^2, \\
&\eta^7 = \omega^1_2 -p_1\omega^1 -p_2\omega^2, \quad \eta^8 = \omega^1_3 -p_2\omega^1 +p_1\omega^2,
\end{aligned}
\]
with independence condition
\[
 \omega^1\wedge \omega^2 \neq 0,
  \]
where $\omega^1 = \omega^2_0$ and $\omega^2 = \omega^3_0$. Moreover, we let $\pi^i = dq_i$, $\zeta^i = dp_i$,
$i=1,2$, be the set of 1-forms that complete $\{\eta^a, \omega^i\}$ to a global coframe field on $Y$.

\subsection{Quadratic equations and involution of the system}

From the structure equations \eqref{str-eq2} of the Laguerre group, exterior differentiation of $\eta^1, \dots \eta^8$
yields the following equations
\begin{equation}\label{qe-LMPS}
\begin{aligned}
&d\eta^1 \equiv d\eta^2 \equiv d\eta^3 \equiv d\eta^4 \equiv 0 \mod \{\eta^a\}, \\
&d\eta^5 \equiv -\pi^1 \wedge \omega^1 -\pi^2\wedge \omega^2 -(2p_1 +{q_1}^2 + {q_2}^2)\, \omega^1 \wedge \omega^2 \mod \{\eta^a\},  \\
&d\eta^6 \equiv 2\pi^1 \wedge \omega^2 -2\pi^2\wedge \omega^1 +2p_2 \, \omega^1 \wedge \omega^2 \mod \{\eta^a\},  \\
&d\eta^7 \equiv -\zeta^1 \wedge \omega^1 -\zeta^2\wedge \omega^2 -4 (q_1p_1 +q_2p_2 )\, \omega^1 \wedge \omega^2 \mod \{\eta^a\},  \\
&d\eta^8 \equiv \zeta^1 \wedge \omega^2 -\zeta^2\wedge \omega^1 -4 (q_1p_2 -q_2p_1 )\, \omega^1 \wedge \omega^2 \mod \{\eta^a\},
\end{aligned}
\end{equation}
where $\{\eta^a\}$ denotes the algebraic ideal generated by the 1-forms $\eta^1, \dots \eta^8$.
The equations in \eqref{qe-LMPS} are referred to as the {\it quadratic equations} of $(Y,\mathcal I,\Omega)$.
The reduced tableaux matrix $\mu$ of the system is then
\[
{}^t\!\mu =
\begin{bmatrix}
0 & 0 & 0 & 0 & -\pi^1 & -2\pi^2 & -\zeta^1 & -\zeta^2\\
0 & 0 & 0 & 0 & -\pi^2 &  2\pi^1 & -\zeta^2 & \zeta^1 \\
\end{bmatrix}.
\]
It then follows that the reduced Cartan characters $s_1'$, $s_2'$ are
\begin{equation}\label{reduced-chars}
  s_1' = 4, \quad s_2' = 0.
   \end{equation}
Let $V_2(\mathcal I)$ denote the set of all 2-dimensional integral elements of $(\mathcal I, \Omega)$, i.e.,
\begin{equation}\label{2-dim-int-el}
 V_2(\mathcal I) = \{ (\mathfrak z, E_2) \in G_2(TY) \,: \, {\eta^a}_{|E_2}
 = 0, \, {d\eta^a}_{|E_2} = 0, \, {\omega^1 \wedge \omega^2}_{|E_2} \neq 0\},
   \end{equation}
and let $O_2(\mathcal I)$ be the open subset of the Grassmannian $G_2(TY)$ consisting of all tangent planes
$W \subset T_{\mathfrak z}Y $ such that ${\omega^1 \wedge \omega^2}_{|W} \neq 0$.
On $O_2(\mathcal I)$, we have fiber coordinates $(X^a_i, V^j_i,W^j_i)$, $a = 1,\dots,8$, $i,j=1,2$, defined by
\[
 {\eta^a}_{|W} = X^a_i(W) {\omega^i}_{|W}, \quad
 {\pi^i}_{|W} = V^i_j(W) {\omega^j}_{|W}, \quad
  {\zeta^i}_{|W} = W^i_j(W) {\omega^j}_{|W}.
 \]
At any point $\mathfrak z = (A, q_1,q_2, p_1, p_2)$ of $Y$, the fiber $O_2(\mathcal I)_{\mathfrak z}$
is identified with the affine space $\mathbb A^{24}$ by the map
\[
 O_2(\mathcal I)_{\mathfrak z} \ni W \longmapsto (X^a_i(W), V^j_i(W),W^j_i(W)).
  \]
  From the quadratic equations \eqref{qe-LMPS} it follows that $V_2(\mathcal I)_{\mathfrak z}$ is the 4-dimensional affine
  subspace of $O_2(\mathcal I)_{\mathfrak z}$ defined by the equations
\begin{equation}\label{affine-eqs}
\begin{aligned}
& X^a_i = 0, \quad a= 1,\dots,8;\, i=1,2 \\
&V^1_2 -V^2_1 = 2p_1 +{q_1}^2 +{q_2}^2,  \\
&V^1_1 +V^2_2 = -p_2 ,  \\
&W^1_2 -W^2_1 = 4(q_1p_1 + q_2p_2),  \\
&W^1_1 +W^2_2 = 4(q_1p_2 - q_2p_1).
\end{aligned}
\end{equation}

This yields the following.

\begin{lemma}
The set $V_2(\mathcal I)$ of 2-dimensional integral elements of the Pfaffian system $(Y,\mathcal I, \Omega)$ is a real analytic 18-dimensional embedded submanifold of the Grassmannian $G_2(TY)$
and the fiber over $\mathfrak z \in Y$ of the bundle
map $V_2(\mathcal I) \to Y$, $(\mathfrak z, E_2)\mapsto \mathfrak z$,
is a 4-dimensional affine subspace of $G_2(T_{\mathfrak z}Y)$, for each $\mathfrak z \in Y$,
i.e.,
$$t = \mathrm{dim}\, V_2(\mathcal I)_{\mathfrak z}=4, \quad
\text{for each} \quad \mathfrak z \in Y.$$
\end{lemma}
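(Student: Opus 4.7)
The plan is to verify the claim by a direct fiber-coordinate computation on the bundle $O_2(\mathcal I) \to Y$, using the quadratic equations \eqref{qe-LMPS} to read off the affine conditions \eqref{affine-eqs} along each fiber. A dimension count sets the target: since $\dim Y = 10 + 4 = 14$, one has $\dim G_2(T_{\mathfrak z}Y) = 2\cdot 12 = 24$, which matches the $8\cdot 2 + 2\cdot 2 + 2\cdot 2 = 24$ fiber coordinates $(X^a_i, V^j_i, W^j_i)$ of $O_2(\mathcal I)_{\mathfrak z}$. If \eqref{2-dim-int-el} cuts out $20$ linearly independent affine equations along each fiber, then $t = 24 - 20 = 4$ and $V_2(\mathcal I)$ has total dimension $14 + 4 = 18$, as asserted.

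First I would handle the Pfaffian condition ${\eta^a}_{|E_2} = 0$, which is literally the vanishing of each $X^a_i$ and accounts for the $16$ equations in the top line of \eqref{affine-eqs}. Next, because $\eta^a$ vanishes on $E_2$, the remaining requirement ${d\eta^a}_{|E_2} = 0$ need only be imposed modulo $\{\eta^a\}$, so its content is read directly off the right-hand sides of \eqref{qe-LMPS}. The first four equations have zero right-hand side modulo $\{\eta^a\}$ and contribute no constraint. Each of the remaining four produces one scalar equation after substituting ${\pi^i}_{|E_2} = V^i_j\,{\omega^j}_{|E_2}$ and ${\zeta^i}_{|E_2} = W^i_j\,{\omega^j}_{|E_2}$ and using ${\omega^1 \wedge \omega^2}_{|E_2} \neq 0$. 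For instance, the reduction of $d\eta^5$ yields
\[
{d\eta^5}_{|E_2} = \bigl(V^1_2 - V^2_1 - 2p_1 - q_1^2 - q_2^2\bigr)\,{\omega^1 \wedge \omega^2}_{|E_2},
\]
which is exactly the second equation of \eqref{affine-eqs}; the analogous reductions of $d\eta^6,d\eta^7,d\eta^8$ supply the remaining three.

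Linear independence of these four equations is immediate, since they involve the four distinct linear combinations $V^1_2 - V^2_1$, $V^1_1 + V^2_2$, $W^1_2 - W^2_1$, $W^1_1 + W^2_2$ of the fiber coordinates, none of which appears in more than one equation. Consequently the integral element condition is an affine system of constant rank $20$ along every fiber, so each $V_2(\mathcal I)_{\mathfrak z}$ is a $4$-dimensional affine subspace of $G_2(T_{\mathfrak z}Y)$. Since the defining equations depend real analytically on $\mathfrak z \in Y$ and have constant rank, the union $V_2(\mathcal I)$ is a real analytic embedded submanifold of $G_2(TY)$ of dimension $18$, fibering as an affine bundle over $Y$.

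There is no substantive obstacle here: the argument is a bookkeeping reduction of the four nontrivial $d\eta^a$, and the only care needed is tracking the signs coming from $\omega^i \wedge \omega^j$ (e.g., $\pi^1 \wedge \omega^1 = -V^1_2\,\omega^1 \wedge \omega^2$) when expressing each $d\eta^a{}_{|E_2}$ in terms of the fiber coordinates. Once those signs are handled, constancy of rank and real analyticity of the equations give the submanifold structure and fiber dimension automatically.
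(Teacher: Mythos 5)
Your proposal is correct and follows essentially the same route as the paper: the paper also identifies $O_2(\mathcal I)_{\mathfrak z}$ with $\mathbb A^{24}$ via the fiber coordinates $(X^a_i,V^j_i,W^j_i)$ and reads the $16+4$ independent affine equations \eqref{affine-eqs} directly off the quadratic equations \eqref{qe-LMPS}, giving $t=24-20=4$ and $\dim V_2(\mathcal I)=14+4=18$. Your sign bookkeeping (e.g. $\pi^1\wedge\omega^1|_{E_2}=-V^1_2\,\omega^1\wedge\omega^2|_{E_2}$) and the independence argument for the four combinations $V^1_2-V^2_1$, $V^1_1+V^2_2$, $W^1_2-W^2_1$, $W^1_1+W^2_2$ are exactly what the paper relies on.
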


From this and \eqref{reduced-chars}, it follows that $t = s_1' + 2s_2'$,
so that Cartan's test of involution applies. We can then state the following.

\begin{prop}
The Pfaffian differential system $(Y,\mathcal I, \Omega)$ of $L$-minimal surfaces is in
involution\footnote{that is, at every point $\mathfrak z \in Y$ there exists an {\it ordinary} integral element
$(\mathfrak z, E_2) \in V_2(\mathcal I)_{\mathfrak z}$ (cf. \cite{BCGGG} for more details).}
and its integral manifolds depend on four functions in one variable.
\end{prop}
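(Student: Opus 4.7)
The plan is to read off the proposition directly from the three ingredients already assembled: the reduced tableau computation giving the reduced Cartan characters $s_1' = 4$, $s_2' = 0$ in \eqref{reduced-chars}; the description of $V_2(\mathcal I)_{\mathfrak z}$ as a 4-dimensional affine subspace of the relevant Grassmannian fiber via \eqref{affine-eqs}; and Cartan's test of involution together with the Cartan--K\"ahler existence theorem. Nothing new needs to be computed; the work is to check one numerical identity, exhibit an ordinary integral flag, and read off the function count.

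First, I would verify Cartan's test. By the preceding lemma, $t := \dim V_2(\mathcal I)_{\mathfrak z} = 4$ at every $\mathfrak z \in Y$, and from \eqref{reduced-chars} we have $s_1' + 2 s_2' = 4 + 2\cdot 0 = 4$. Thus $t = s_1' + 2 s_2'$, which is the content of Cartan's test. I would then spell out why this implies the existence of an ordinary integral flag $(0)\subset E_1 \subset E_2$ at each $\mathfrak z$: choose any $E_2 \in V_2(\mathcal I)_{\mathfrak z}$ on which $\omega^1\wedge \omega^2$ is nonzero and such that the line $E_1 \subset E_2$ defined by $\omega^2{}_{|E_1}=0$ has polar space of the generic dimension predicted by the tableau (the $s_1' = 4$ independent polar equations coming from the four independent columns of $\mu$). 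Because the reduced tableau has involutive ranks $(4,0)$ and the affine fiber \eqref{affine-eqs} has exactly the expected dimension $s_1'+2s_2'=4$, a Zariski-generic choice of $E_2$ is ordinary.

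Next, I would invoke the Cartan--K\"ahler theorem in the real analytic category. Because $Y$ and $(\mathcal I,\Omega)$ are real analytic and the system has been shown to be in involution with $s_2'=0$, the theorem guarantees that through a generic real analytic integral 1-manifold (carrying $s_1' = 4$ free coordinate functions of one variable) there passes a unique real analytic integral surface of $(\mathcal I,\Omega)$. Equivalently, the general integral manifold depends on $s_1' = 4$ arbitrary analytic functions of a single variable, which is precisely the assertion of the proposition.

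The only genuine subtlety I foresee is the ordinariness statement: one must make sure that for at least one $E_2$ in the 4-parameter affine family defined by \eqref{affine-eqs} the flag $(0)\subset E_1 \subset E_2$ is regular, not just integral. Because the reduced tableau matrix $\mu$ displayed above has rank $4$ in its generic column (so the polar equations for $E_1$ are genuinely $s_1'=4$ independent conditions), this is automatic for a generic $E_2$; but writing it down cleanly requires choosing $E_1$ transverse to $\omega^2=0$ and inspecting the $\pi^i, \zeta^i$ columns of $\mu$. With that checked, Cartan's test and Cartan--K\"ahler deliver the proposition.
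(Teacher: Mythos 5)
Your proposal is correct and follows essentially the same route as the paper: verify Cartan's test by matching $t=\dim V_2(\mathcal I)_{\mathfrak z}=4$ (from the affine equations \eqref{affine-eqs}) against $s_1'+2s_2'=4$ (from the reduced tableau), and read off the generality ``four functions of one variable'' from the last nonzero character $s_1'=4$ via Cartan--K\"ahler. Your added remarks on exhibiting an ordinary flag are the standard content of Cartan's test and do not change the argument.
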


\subsection{Polar equations and the Cauchy problem}

Next, we study the polar equations of the system $(Y,\mathcal I, \Omega)$. On the tangent bundle $TY$,
we consider fibre coordinates $(a^i, X^a, V^i,W^i)$ defined, for any tangent vector $\xi$, by
\begin{equation}\label{fiber-coord-TY}
 a^i(\xi) = \omega^i(\xi), \quad X^a(\xi) = \eta^a(\xi), \quad V^i(\xi) = \pi^i(\xi), \quad
  W^i(\xi) = \zeta^i(\xi).
   \end{equation}
Thus, $(a^i, X^a, V^i,W^i)$ can be used as homogeneous fiber coordinates on the Grassmannian $G_1(TY)$.

A 1-dimensional integral element of $(\mathcal I, \Omega)$, at a fixed point $\mathfrak z \in Y$, is
a 1-dimensional subspace $E_1 = \text{span}\{\xi\}$ of $T_{\mathfrak z}Y$, spanned by a nonzero tangent vector
$\xi$, such that
\begin{equation}\label{eqs-V1}
 \eta^a(\xi) =0, \, a=1,\dots,8; \quad \omega^1(\xi)\omega^1(\xi) + \omega^2(\xi)\omega^2(\xi) \neq 0.
\end{equation}
Let $V_1(\mathcal I)$
denote the set
of all 1-dimensional integral elements
of $(\mathcal I, \Omega)$.
%
The set $V_1(\mathcal I)$
is the submanifold of $G_1(TY)$
defined by the equations
\begin{equation}\label{1-integral}
  X^a = 0, \quad a= 1,\dots,8;\quad (a^1)^2 + (a^2)^2  \neq 0.
   \end{equation}

   Let $(\mathfrak z, E_1)$ be a 1-dimensional integral element. We recall that the {\it polar} or
   {\it extension space}
   $H(\mathfrak z, E_1)$ of $(\mathfrak z, E_1)$ is the subspace of $T_{\mathfrak z}Y$ defined
   by the {\it polar equations}
\begin{equation}\label{polar-space-eqs}
  \eta^a = 0, \quad \imath_\xi d\eta^a = 0, \quad a= 1,\dots,8.
   \end{equation}

Let $\mathfrak z = (A, q_1,q_2, p_1, p_2)\in Y$ and $(\mathfrak z, E_1)$ a
1-dimensional integral element with $E_1 = \text{span}\,\{\xi\}$,
$\xi \neq 0$.
From \eqref{polar-space-eqs}, \eqref{1-integral}, \eqref{fiber-coord-TY}, using the quadratic equations
\eqref{qe-LMPS} of the system, the polar equations read
%
%
\begin{eqnarray*}
&& \eta^a =0, \quad a= 1,\dots,8,\\
&&a^1 \pi^1 + a^2 \pi^2 + [a^2(2p_1 + {q_1}^2 +{q_2}^2) - V^1]\, \omega^1 + \\
     & &\hskip4cm -\, [a^1(2p_1 + {q_1}^2 +{q_2}^2) + V^2]\, \omega^2=0, \nonumber\\
&& -a^2 \pi^1 + a^1 \pi^2 - [V^2 + p_2 a^2]\, \omega^1
 + [V^1 + p_2 a^1]\, \omega^2 =0, \\
&&a^1 \zeta^1 + a^2 \zeta^2  - [W^1 + 4a^1(q_1p_1 + q_2p_2)]\, \omega^1 + \\
    & &\hskip4cm -\, [W^2 - 4a^2(q_1p_1 + q_2p_2)]\, \omega^2=0, \nonumber\\
&&-a^2 \zeta^1 + a^1 \zeta^2  - [W^2 - 4a^2(q_1p_2 - q_2p_1)]\, \omega^1 + \\
     && \hskip4cm +\, [W^1 - 4a^1(q_1p_2 - q_2p_1)]\, \omega^2=0. \nonumber
  \end{eqnarray*}

The polar equations have maximal rank and then the polar space $H(\mathfrak z, E_1)$ has dimension two,
for every $(\mathfrak z, E_1) \in V_1(\mathcal I)$.
Therefore, we are led to the following.

\begin{lemma}\label{l:K-regular}
For any $(\mathfrak z, E_1)\in V_1(\mathcal I)$, the polar space $H(\mathfrak z, E_1)$ is the unique
2-dimensional integral element of $(\mathcal I, \Omega)$ such that $E_1 \subset H(\mathfrak z, E_1)$. According to
\eqref{1-integral}, the set $V_1(\mathcal I)$ is a 19-dimensional embedded submanifold of $G_1(TY)$, and hence
any $(\mathfrak z, E_1)\in V_1(\mathcal I)$ is K\"ahler-regular and $$(0)_{\mathfrak z} \subset (\mathfrak z, E_1)
\subset H(\mathfrak z, E_1)$$ is a regular flag (cf. \cite[p. 42]{GJbook}).
\end{lemma}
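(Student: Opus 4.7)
The plan is to read Lemma \ref{l:K-regular} off three ingredients: a dimension count for $V_1(\mathcal I)$, the maximal-rank check for the polar equations displayed just above the statement, and the standard fact that a 1-integral element at which the polar equations have locally constant maximal rank is K\"ahler-regular with its polar space as the unique 2-extension.

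First I would dimension-count $V_1(\mathcal I)$. Since $\dim Y = \dim L + 4 = 14$, the Grassmann bundle $G_1(TY) \to Y$ has $\mathbb{RP}^{13}$ fibers, so $\dim G_1(TY) = 27$. On the open locus $(a^1)^2 + (a^2)^2 \neq 0$, the equations $X^1 = \cdots = X^8 = 0$ cut out a smooth submanifold of codimension $8$ transversally, giving $\dim V_1(\mathcal I) = 19$.

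Next I would verify the maximal-rank claim for the polar equations at $(\mathfrak z, E_1)$. By the quadratic equations \eqref{qe-LMPS}, the conditions $\imath_\xi d\eta^a = 0$ are already implied by $\eta^b = 0$ for $a = 1,2,3,4$, and contribute exactly four genuinely new conditions for $a = 5,6,7,8$. The coefficients of $\pi^1, \pi^2, \zeta^1, \zeta^2$ in these four conditions assemble into the block-diagonal matrix
\[
\begin{bmatrix} a^1 & a^2 & 0 & 0 \\ -a^2 & a^1 & 0 & 0 \\ 0 & 0 & a^1 & a^2 \\ 0 & 0 & -a^2 & a^1 \end{bmatrix},
\]
whose determinant equals $\bigl((a^1)^2 + (a^2)^2\bigr)^2 > 0$. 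Combined with the eight manifestly independent equations $\eta^a = 0$, this gives $12$ linearly independent equations on the $14$-dimensional space $T_{\mathfrak z}Y$, so $\dim H(\mathfrak z, E_1) = 14 - 12 = 2$. This rank calculation is the only substantive step; the remainder is bookkeeping.

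Finally, $E_1 \subset H(\mathfrak z, E_1)$ holds tautologically, and any $2$-dimensional integral element $E_2 \supset E_1$ must lie in $H(\mathfrak z, E_1)$, because for every $v \in E_2$ one has $\eta^a(v) = 0$ and $d\eta^a(\xi,v) = 0$. Matching dimensions forces $E_2 = H(\mathfrak z, E_1)$; the independence condition $\omega^1 \wedge \omega^2 \neq 0$ on $H(\mathfrak z, E_1)$ is then inherited from any such $E_2$, whose existence is guaranteed by the involution proposition that follows. Since the polar-rank computation is uniform on all of $V_1(\mathcal I)$ — it depends only on the open condition $(a^1)^2 + (a^2)^2 \neq 0$ — the function $(\mathfrak z', E_1') \mapsto \dim H(\mathfrak z', E_1')$ is locally constant on $V_1(\mathcal I)$. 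This is precisely the K\"ahler-regularity of $(\mathfrak z, E_1)$ and hence of the flag $(0)_{\mathfrak z} \subset E_1 \subset H(\mathfrak z, E_1)$ in the sense of \cite{BCGGG, GJbook}.
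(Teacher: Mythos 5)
Your proof follows essentially the same route as the paper: the paper's own argument consists of displaying the polar equations, observing that they have maximal rank so that $\dim H(\mathfrak z,E_1)=2$ for every $(\mathfrak z,E_1)\in V_1(\mathcal I)$, and reading the lemma off from this together with the count $\dim V_1(\mathcal I)=27-8=19$. Your dimension count and the invertible $4\times 4$ block in the $(\pi^1,\pi^2,\zeta^1,\zeta^2)$-coefficients are exactly the content of that ``maximal rank'' claim, carried out correctly (the missing factor $2$ in the $d\eta^6$ row is a harmless rescaling), and your observation that $\imath_\xi d\eta^a=0$ is vacuous for $a=1,\dots,4$ is right.

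The one step you should tighten is the independence condition $\omega^1\wedge\omega^2\vert_{H(\mathfrak z,E_1)}\neq 0$. Appealing to the involution proposition for the existence of an admissible $E_2\supset E_1$ is circular: that proposition guarantees integral $2$-planes satisfying the independence condition at every point of $Y$, but not one containing the \emph{prescribed} $E_1$ --- and the latter is precisely what this lemma is meant to establish (it is the input to the Cartan--K\"ahler step in Lemma \ref{l:cauchy}). No outside input is needed, though: your own rank computation already gives it. On the $6$-dimensional subspace $\{\eta^a=0\}\subset T_{\mathfrak z}Y$ the four nontrivial polar equations, by invertibility of the $4\times 4$ block, solve uniquely for $\pi^i(v),\zeta^i(v)$ as linear functions of $\omega^1(v),\omega^2(v)$; hence $H(\mathfrak z,E_1)$ is a graph over the $(\omega^1,\omega^2)$-directions, the forms $\omega^1,\omega^2$ restrict to a coframe on it, and the independence condition holds. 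With that one-line repair the argument is complete and matches the paper's.
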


By the Cartan--K\"ahler theorem (cf. \cite[p. 42]{GJbook}, or \cite[p. 81]{BCGGG}) we have the following.

\begin{lemma}\label{l:cauchy}
Let $\gamma : (-\epsilon,\epsilon) \to Y$ be a real analytic embedded curve such that $\mathrm{span}\{\dot \gamma (t)\}
\in V_1(\mathcal I)$, for each $t\in (-\epsilon,\epsilon)$. Then there exists a unique real analytic
connected integral manifold $\Sigma\subset Y$ of $(\mathcal I, \Omega)$ such that $\gamma \subset \Sigma$. The manifold $\Sigma$
is unique in the sense that any other integral manifold of $(Y,\mathcal I, \Omega)$ with these properties agrees with $\Sigma$ on an open neighborhood of $\gamma$.
\end{lemma}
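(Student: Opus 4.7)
The plan is to derive Lemma \ref{l:cauchy} as a direct application of the Cartan--K\"ahler theorem, using Lemma \ref{l:K-regular} together with the real analyticity of the coefficient functions of the system $(Y,\mathcal I,\Omega)$. All of the analytic content has been packaged into the preceding lemmas; what remains is essentially to verify the hypotheses of the theorem along $\gamma$ and then extract uniqueness from the regularity of the flag.

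First I would record the necessary setup. The manifold $Y = L \times \mathbb R^4$ is real analytic, the coframing $(\omega^i,\eta^a,\pi^i,\zeta^i)$ is real analytic, and the generating 1-forms $\eta^1,\dots,\eta^8$ of $\mathcal I$ together with the quadratic equations \eqref{qe-LMPS} have real analytic coefficients. By Lemma \ref{l:K-regular}, at every point $(\mathfrak z, E_1) \in V_1(\mathcal I)$, the polar space $H(\mathfrak z, E_1)$ has dimension exactly two, so it coincides with the unique 2-dimensional integral element extending $E_1$; moreover $V_1(\mathcal I)$ is a 19-dimensional embedded submanifold of $G_1(TY)$. This gives precisely the numerical equality needed for the Cartan--K\"ahler regularity test, namely that the codimension of $V_1(\mathcal I)$ inside the variety cut out by the polar equations around each integral element matches the expected count, so that each flag $(0)_{\mathfrak z} \subset (\mathfrak z, E_1) \subset H(\mathfrak z, E_1)$ is regular in the sense of \cite[p.~42]{GJbook}.

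Next I would apply the existence half of the Cartan--K\"ahler theorem. The curve $\gamma:(-\varepsilon,\varepsilon)\to Y$ is real analytic, embedded, and by hypothesis its velocity $\dot\gamma(t)$ spans an element of $V_1(\mathcal I)$ for every $t$, so $\gamma$ is itself a 1-dimensional real analytic integral manifold of $(\mathcal I,\Omega)$. Because every flag built on such a $\dot\gamma(t)$ is regular by Lemma \ref{l:K-regular}, the Cartan--K\"ahler theorem (in the form stated in \cite[p.~81]{BCGGG}) produces a real analytic 2-dimensional integral manifold $\Sigma$ of $\mathcal I$ through $\gamma$, with $T_{\gamma(t)}\Sigma = H(\gamma(t),\mathrm{span}\{\dot\gamma(t)\})$ for each $t$; since the independence condition $\omega^1\wedge\omega^2$ is non-zero on each polar space (it restricts non-trivially because such polar spaces are elements of $V_2(\mathcal I)$), $\Sigma$ is an integral manifold of the full system $(\mathcal I,\Omega)$.

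For uniqueness, I would invoke the uniqueness clause of the Cartan--K\"ahler theorem, which requires precisely the regularity of the flags provided by Lemma \ref{l:K-regular}: any other real analytic connected integral 2-manifold $\Sigma'$ of $(\mathcal I,\Omega)$ through $\gamma$ must have $T_{\gamma(t)}\Sigma' \supseteq \mathrm{span}\{\dot\gamma(t)\}$ and be integral, so $T_{\gamma(t)}\Sigma' = H(\gamma(t),\mathrm{span}\{\dot\gamma(t)\}) = T_{\gamma(t)}\Sigma$ by the uniqueness part of Lemma \ref{l:K-regular}; by the local uniqueness built into Cartan--K\"ahler the two agree on a tubular neighborhood of $\gamma$. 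There is no real obstacle in this proof: all of the hard work (computing the quadratic equations, the tableau, the characters, verifying $t = s_1' + 2 s_2'$, and counting the polar spaces) has already been carried out; the only delicate point worth emphasizing is that the polar space $H(\mathfrak z,E_1)$, being a 2-dimensional integral element, automatically satisfies the independence condition, so the $\Sigma$ produced by Cartan--K\"ahler is genuinely an integral manifold of $(\mathcal I,\Omega)$ rather than merely of $\mathcal I$.
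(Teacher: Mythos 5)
Your proposal is correct and follows exactly the route the paper takes: the paper deduces this lemma directly from the Cartan--K\"ahler theorem, with all the substantive input (K\"ahler-regularity of every $(\mathfrak z,E_1)\in V_1(\mathcal I)$ and the regular flag $(0)_{\mathfrak z}\subset(\mathfrak z,E_1)\subset H(\mathfrak z,E_1)$) already supplied by Lemma \ref{l:K-regular}. Your additional remarks on the independence condition holding on the polar spaces and on extracting uniqueness from the uniqueness of the $2$-dimensional extension are accurate elaborations of details the paper leaves implicit.
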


This has the following geometric consequence.

\begin{thm}[The Cauchy problem]\label{thm:Cauchy-pbm}
Let $s_1, s_2, r_1, r_2 : (-\epsilon,\epsilon) \to \mathbb R$ be real analytic functions defined on an open interval, and
let $B\in L$ be an element of the Laguerre group. There exists an open neighborhood $U \subset \mathbb R^2$ of the
origin
and a unique real analytic $L$-minimal framed surface $(U,f,A)$,
satisfying the following initial
conditions
\begin{enumerate}
\item $A(0,0) = B$, \hskip0.1cm $q_i(t,0) = s_i(t)$, \hskip0.1cm $p_i(t,0) = r_i(t)$, for each
$t\in (-\epsilon,\epsilon)\cap U$,
\item ${\alpha^2_0}_{|(-\epsilon,\epsilon)\cap U} = A^*(\omega^2_0)_{|(-\epsilon,\epsilon)\cap U} = dt$, \hskip0.1cm ${\alpha^3_0}_{|(-\epsilon,\epsilon)\cap U} = A^*(\omega^3_0)_{|(-\epsilon,\epsilon)\cap U} = 0$,
\end{enumerate}
where $q_i$, $p_i$, $i=1,2$, are the invariant functions of $f$.
\end{thm}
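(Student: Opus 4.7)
The plan is to reduce the statement to Lemma \ref{l:cauchy} by constructing a real analytic 1-dimensional integral curve $\gamma : (-\epsilon,\epsilon) \to Y$ of $(\mathcal I,\Omega)$ that encodes all of the prescribed data, and then translating the 2-dimensional integral manifold produced by the Cartan--K\"ahler theorem back into a framed Legendre surface through the dictionary of \S \ref{ss:LPS}.

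The construction of $\gamma$ proceeds as follows. For $t\in(-\epsilon,\epsilon)$, let $a(t)\in \mathfrak l$ be the unique element whose components with respect to the basis dual to the Maurer--Cartan 1-forms are
$\omega^2_0(a) = 1$, $\omega^2_1(a) = 1$, $\omega^3_1(a) = -1$, $\omega^3_2(a) = s_1(t)$, $\omega^1_1(a) = 2 s_2(t)$, $\omega^1_2(a) = r_1(t)$, $\omega^1_3(a) = r_2(t)$, with all remaining independent components ($\omega^3_0$, $\omega^4_0$, $\omega^1_0$) equal to zero. This is consistent with the shape of the Maurer--Cartan matrix displayed after \eqref{str-eq2}, so $a(t)\in \mathfrak l$. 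Since $a$ is real analytic in $t$, the linear ODE $\dot A(t) = A(t)\, a(t)$ on $L$ with initial condition $A(0) = B$ admits a unique real analytic solution $A : (-\epsilon,\epsilon) \to L$, by standard existence-uniqueness for analytic ODEs on Lie groups. Set
\[
 \gamma(t) := \bigl(A(t),\, s_1(t),\, s_2(t),\, r_1(t),\, r_2(t)\bigr) \in Y.
\]
By the very choice of $a(t)$, each of the eight 1-forms $\eta^1,\dots,\eta^8$ pulls back to zero along $\gamma$, while $\omega^1(\dot\gamma(t)) = 1 \neq 0$. Hence $\mathrm{span}\{\dot\gamma(t)\} \in V_1(\mathcal I)$ for every $t$, according to \eqref{1-integral}.

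With this initial integral curve in hand, Lemma \ref{l:cauchy} produces a unique real analytic connected 2-dimensional integral manifold $\Sigma\subset Y$ of $(\mathcal I,\Omega)$ through $\gamma$. Write $\Sigma$ as the image of a real analytic embedding of some open neighborhood $U \subset \mathbb R^2$ of the origin, parametrized by $(t,u)$ so that $\gamma$ corresponds to $u=0$; since the independence condition $\omega^1\wedge \omega^2 \neq 0$ holds on $\Sigma$ and since $\omega^2 = 0$ along $\gamma$, one may, after shrinking $U$, arrange that $(\omega^2_0,\omega^3_0)$ restricts along $\gamma$ to $(dt,0)$ exactly. Using the identification between integral manifolds of $(Y,\mathcal I,\Omega)$ and framed Legendre immersions of invariants $(q_1,q_2,p_1,p_2,p_3 = -p_1)$ from \S \ref{ss:LPS}, the map $f := \pi_L\circ A : U \to \La$ together with $A$ defines a framed Legendre immersion whose invariants satisfy $p_1 + p_3 = 0$; that is, $(U,f,A)$ is $L$-minimal. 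All four initial conditions are built into $\gamma$, and uniqueness follows from the uniqueness clause of Lemma \ref{l:cauchy}.

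I expect the main delicate point to be the verification that the tentative element $a(t)\in \mathfrak{gl}(6,\R)$ constructed from the initial data really lies in $\mathfrak l$ and that the 1-form system $\eta^a|_\gamma = 0$ is then automatically and consistently satisfied, rather than over-determining $A$. Once the Maurer--Cartan matrix structure displayed after \eqref{str-eq2} has been checked to accommodate the prescribed values (the nontrivial entries are precisely those that one is free to prescribe), the rest is a routine combination of analytic ODE theory and the Cartan--K\"ahler machinery already developed.
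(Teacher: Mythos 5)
Your overall strategy is exactly the paper's: encode the data $(s_i,r_i)$ in a real analytic $\mathfrak l$-valued $1$-form $a(t)\,dt$, integrate the linear ODE $\dot A=A\,a(t)$ with $A(0)=B$ to obtain an embedded analytic curve $\Gamma$ in $L$, lift it to $\gamma=(\Gamma,s_1,s_2,r_1,r_2)$ in $Y$, verify $\mathrm{span}\{\dot\gamma(t)\}\in V_1(\mathcal I)$, and apply Lemma \ref{l:cauchy}; the translation back to a framed immersion and the uniqueness claim are handled as you say.

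There is, however, one concrete error in your prescription of $a(t)$: you set $\omega^3_1(a)=-1$, and it must be $0$. A canonical frame satisfies $\alpha^3_1=-\alpha^3_0$ (see \eqref{alfa21-31} and \eqref{framed-imm-conds}); along the initial curve $\alpha^3_0=\omega^2(\dot\gamma)=0$, so $\alpha^3_1$ must vanish there as well, and indeed the paper's matrix $\beta$ has $0$ in the $(3,1)$ slot. Your value comes from taking the displayed generator $\eta^4=\omega^3_1+\omega^1$ at face value, but that display contains a typo: the correct generator is $\eta^4=\omega^3_1+\omega^2$. This is forced by \eqref{framed-imm-conds}, and independently by the quadratic equation $d\eta^1\equiv 0\ \mathrm{mod}\ \{\eta^a\}$, since
\[
d\omega^4_0=-\omega^2_1\wedge\omega^2_0-\omega^3_1\wedge\omega^3_0+\omega^1_1\wedge\omega^4_0
\]
reduces to $0$ modulo $\{\eta^a\}$ only if $\omega^3_1\equiv-\omega^2$, not $\omega^3_1\equiv-\omega^1$. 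With $\omega^3_1(a)=-1$ your curve is not an integral curve of the actual Pfaffian system of framed $L$-minimal surfaces, so Lemma \ref{l:cauchy} cannot be invoked on it, and any frame extending it would violate the canonical-frame identity $\alpha^3_1=-\alpha^3_0$, hence could not satisfy condition (2) of the theorem. The repair is a one-character change --- set $\omega^3_1(a)=0$ --- after which your construction coincides with the paper's $\beta$ and the rest of your argument goes through verbatim.
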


\begin{proof}
Given the real analytic functions $s_i$ and $r_i$, consider the $\mathfrak l$-valued 1-form $\beta$ defined on $(-\epsilon,\epsilon)$
given by
\[
\beta=
 \begin{bmatrix}
 0   &  0  & 0         & 0         & 0                     & 0 \\
 0  &2s_2  & r_1       &r_2        &0                      & 0\\
 1   & 1   & 0         & -s_1      &r_1                   & 0\\
 0   & 0   &s_1       &0           &r_2                   & 0\\
 0   & 0  & 1         &  0        &-2s_2                  &0 \\
 0   & 0  & 1         &  0        &0               &0 \\
\end{bmatrix} dt.
\]
Let $\Gamma : (-\epsilon,\epsilon) \to L$ be the unique real analytic map such that
\[
 \Gamma^* (\omega) = \beta, \quad \Gamma(0) = B.
  \]
We may suppose that $\Gamma$ is an embedding, by possibly choosing a smaller $\epsilon$.
  Next, let $\gamma : (-\epsilon,\epsilon) \to Y$ be the curve defined by
  $\gamma(t) : = (\Gamma(t), s_1(t),s_2(t),r_1(t),r_2(t))$,
  for each $t$. By construction, $\gamma$ is a real analytic embedded curve such that
  $\mathrm{span}\,\{\dot \gamma (t)\} \in V_1(\mathcal I)$, for each $t\in (-\epsilon,\epsilon)$.
  According to Lemma \ref{l:cauchy}, there exists a unique real analytic embedded solution of the
  system $(Y, \mathcal I, \Omega)$, say $\Sigma = (A, q_1,q_2,p_1,p_2)  : U \to Y$, defined on an open disk $U$ containing
  $(-\epsilon,\epsilon)\times \{0\}$, such that $\Sigma(t,0) = \gamma(t)$, for each
  $t\in (-\epsilon,\epsilon)$. The framed immersion $(U, f, A)$, where $f = \pi_L\circ A$, is $L$-minimal
  and $q_1,q_2,p_1,p_2$ are the invariant functions of $f$. Thus $(U, f, A)$
  satisfies the required conditions.
\end{proof}

\begin{remark}
Note that the functional dependence of the general solutions of the system $(Y,\mathcal I, \Omega)$ agrees with that of
the initial data of Theorem \ref{thm:Cauchy-pbm}. Specifying the initial data of Theorem \ref{thm:Cauchy-pbm}
amounts to the choice of four arbitrary functions in one variable.
\end{remark}


\section{Generalized $L$-minimal surfaces}\label{s:gen-L-min}

Let $S$ be an oriented surface and let $f : S \to \La$
be a nondegenerate Legendre immersion into the Laguerre space.
Let $A : S\to L$ be a canonical frame field along $f$.
%
%
The Laguerre metric $(\alpha^2_0)^2 + (\alpha^3_0)^2$ and the area element
$\alpha^2_0\wedge\alpha^3_0$ induced by $A$ determine on $S$ an oriented conformal
structure and hence, by the existence of isothermal coordinates, a unique compatible
complex structure which makes $S$ into a Riemann surface. In terms of the canonical
frame field $A$, the complex structure is characterized by the property that
the complex-valued 1-form
\begin{equation}\label{1-0form}
 \varphi = \alpha^2_0 +i \alpha^3_0
  \end{equation}
is of type $(1,0)$.
%
Taking into account Remark \ref{r:p-q-change}, the complex-valued quartic differential form given by
\begin{equation}\label{quartic}
 \Q_f = Q  \varphi^4, \quad Q := \frac{1}{2}(p_1 - p_3) - i p_2,
  \end{equation}
and the complex-valued quadratic differential form given by
\begin{equation}\label{quadratic}
 \P_f = P \varphi^2, \quad P :=  p_1 + p_3
  \end{equation}
are globally defined on the Riemann surface $S$.

\begin{remark}
The quartic differential $\Q_f$ was considered by the authors in \cite{MN-TAMS}, where it was proved
that for $L$-minimal surfaces $\Q_f$ is holomorphic. Notice that the quadratic differential
$\P_F$ vanishes exactly for $L$-minimal surfaces.
\end{remark}

\begin{defn}
A nondegenerate Legendre immersion $f : S \to \La$ is called a {\it generalized $L$-minimal surface}
if the quartic differential $\Q_f$ is holomorphic.\footnote{The name is motivated by the name
``generalized Willmore surfaces'' (``verallgemeinerte Willmore-Fl\"achen'') used by K. Voss \cite{Voss1985} to indicate
those surfaces in conformal geometry with holomorphic Bryant
differential $(4,0)$ form \cite{Br-duality}. The name ``generalized Laguerre minimal
surfaces'' was also used with a different meaning in \cite{AGL} to indicate
those  $L$-minimal surfaces for which the Gauss curvature $K$ is allowed to vanish on a set of isolated points.}
\end{defn}

\begin{remark}[Surfaces in M\"obius geometry]\label{r:moebius-ge}
The results discussed in this section can be considered as the Laguerre geometric counterpart
of well-known results for surfaces in M\"obius geometry (cf. \cite{HJlibro, JMN} for more details).
For the sake of completeness, we briefly recall some of them.
Many features of CMC surfaces in 3-dimensional space forms,
viewed as isothermic surfaces in M\"obius space $S^3$, can be interpreted in terms of the transformation
theory of isothermic surfaces.
More specifically, it can be proved that CMC surfaces in space forms arise in associated 1-parameter families
as $T$-transforms of minimal surfaces in space forms \cite{Bianchi1905-12,Ca1903,Ca1915,HJlibro,JMN}.
Minimal surfaces in space forms are isothermic and
Willmore,
that is, are critical points of the Willmore energy
$\int(H^2- K) dA$
(cf. \cite{Blaschke,Br-duality,Thomsen}).
By a classical result of Thomsen \cite{HJlibro, JMN, Thomsen}, a Willmore surface
without umbilics
is isothermic if and only if
it is locally M\"obius equivalent to a minimal surface in
some space form.
K. Voss obtained a uniform M\"obius geometric characterization of
Willmore  surfaces and CMC surfaces in space forms
using the differential $(4,0)$ form $\Q$
introduced by
Bryant \cite{Br-duality} for Willmore surfaces
(cf. also \cite{Bo-Pe2009,Bo2012,JMN, Voss1985}).
Voss observed that $\Q$,
which indeed
may be defined for any conformal immersion of a Riemann surface $M$
into $S^3$,
is holomorphic if and only if, locally and away from umbilics and
isolated points, the immersion is Willmore or has constant mean curvature in some
space form embedded in $S^3$.
\end{remark}

It quite easy to prove the following facts. For the proof we refer to \cite{MN-MZ}.

\begin{lemma}\label{prop:hol-cond}
Let $f: S \to \La$ be a nondegenerate Legendre immersion.
Then:

\begin{enumerate}

\item The quartic differential $\Q_f$ is holomorphic if and only if
\begin{equation}\label{cns-holQ}
 dQ \wedge \varphi = -4(q_2\alpha^2_0 - q_1\alpha^3_0) Q \wedge \varphi.
  \end{equation}

\item $\Q_f$ is holomorphic
if and only if the Laguerre Gauss map of $f$ has parallel mean curvature
vector.

\item If $\Q_f$ is holomorphic, then
$\P_f$ is holomorphic.

\item If $\Q_f$ is holomorphic and $\P_f$ is non-zero, then $f$
is $L$-isothermic. 

\item If $\Q_f$ is holomorphic and $\P_f \neq 0$, then
$\Q_f = c\, \P_f^2$, for $c\in \mathbb R$.

\end{enumerate}
\end{lemma}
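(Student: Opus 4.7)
The plan is to handle the five parts in order, exploiting the single identity $d\varphi = \rho \wedge \varphi$ where $\rho := q_2\alpha^2_0 - q_1\alpha^3_0$, which follows directly from the structure equations \eqref{se0} and the definition \eqref{1-0form}. For part (1), I invoke the general fact that a section $s = f\varphi^n$ of the $n$th power of the canonical bundle is holomorphic if and only if $df \wedge \varphi + nf\, d\varphi = 0$: in a local holomorphic coordinate $z$ with $\varphi = \lambda\,dz$, both sides equal $-\lambda^{1-n}\partial_{\bar z}(f\lambda^n)\,dz \wedge d\bar z$. Setting $n=4$, $f = Q$, and inserting $d\varphi = \rho\wedge\varphi$ gives \eqref{cns-holQ} on the nose.

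For part (2), I split \eqref{cns-holQ} into real and imaginary parts via $Q = q - ip_2$ with $q = (p_1 - p_3)/2$. This produces two scalar conditions in the coframe $(\alpha^2_0, \alpha^3_0)$ mixing the components of $dq$ and $dp_2$. The structure equations \eqref{se3} and \eqref{se4} provide two further scalar relations among the components of $dp_1, dp_2, dp_3$. After eliminating the partial derivatives of $q$ and $p_2$ between these two systems, one is left with the single vector identity $dP = -2P\rho$, where $P = p_1 + p_3$. Because $\alpha^1_1 = 2\rho$ by \eqref{alfa11}, this reads $dP + P\alpha^1_1 = 0$, which by \eqref{H-normal-deriv} is exactly the parallel condition $\nabla^\perp \mathbf{H} \equiv 0$. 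The converse is the same chain run backwards.

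Parts (3) and (4) are short corollaries. For (3), applying the criterion of part (1) to $\P_f$ with $n=2$ asks for $dP \wedge \varphi + 2P\,d\varphi = 0$; but $dP = -2P\rho$ by (2), so $dP \wedge \varphi = -2P\rho \wedge \varphi = -2P\,d\varphi$, as needed. For (4), I differentiate $dP + 2P\rho = 0$; the term $dP \wedge \rho$ vanishes since $dP$ is a multiple of $\rho$, leaving $P\,d\rho = 0$. A short computation with \eqref{se0} and \eqref{se2} gives $d\rho = p_2\,\alpha^2_0 \wedge \alpha^3_0$, hence $Pp_2 \equiv 0$. Because $P$ obeys the linear first-order equation $dP = -2P\rho$, uniqueness of ODE solutions along smooth curves in the connected surface $S$ forces $P$ to be either identically zero or nowhere zero; the hypothesis $\P_f \not\equiv 0$ excludes the first, so $p_2 \equiv 0$ and $f$ is $L$-isothermic.

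For part (5), with $p_2 = 0$ the invariant $Q$ reduces to the real function $q$, so $dq + 4q\rho$ is a real 1-form. Since $\varphi$ has $\mathbb{R}$-linearly independent real and imaginary parts, \eqref{cns-holQ} sharpens from a vanishing modulo $\varphi$ to the exact equality $dq + 4q\rho = 0$. Combined with $dP + 2P\rho = 0$, this yields
\[
d\!\left(\frac{q}{P^2}\right) = \frac{dq}{P^2} - \frac{2q\,dP}{P^3} = \frac{-4q\rho}{P^2} + \frac{4q\rho}{P^2} = 0,
\]
so $q/P^2$ is a real constant $c$ on the connected surface $S$, whence $\Q_f = c\,\P_f^2$. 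The main obstacle is the bookkeeping in part (2): one must verify that the pair of real identities extracted from \eqref{cns-holQ} combines with the two consequences of \eqref{se3}--\eqref{se4} to collapse to the single covariant equation for $P$; every other step is a one-liner built on this.
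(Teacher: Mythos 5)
Your proof is correct. The paper itself does not prove this lemma (it defers to [MN-MZ]), but every ingredient you use is exactly what the paper sets up: the $(1,0)$-form $\varphi$ with $d\varphi=\rho\wedge\varphi$ for $\rho=q_2\alpha^2_0-q_1\alpha^3_0=\tfrac12\alpha^1_1$, the structure equations \eqref{se3}--\eqref{se4}, and the parallel-mean-curvature equation \eqref{gLm-eq}, which is precisely the identity $dP+2P\rho=0$ your elimination in part (2) produces. I checked the bookkeeping you flag as the main obstacle: writing $dQ\wedge\varphi+4Q\,\rho\wedge\varphi=0$ in components and substituting $p_{2,2}$ and $p_{2,3}$ from \eqref{se3}--\eqref{se4} does collapse the two real conditions to the two components of $dP+2P\rho=0$, and the elimination is reversible, so the equivalence in (2) holds. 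The remaining parts ((3) via the $n=2$ criterion, (4) via $P\,d\rho=Pp_2\,\alpha^2_0\wedge\alpha^3_0=0$ together with the vanishing/nonvanishing dichotomy for solutions of $dP=-2P\rho$, and (5) via $d\bigl((p_1-p_3)/(2P^2)\bigr)=0$) are all sound as written.
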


We are now ready to prove our next result.

\begin{prop}\label{thm:hQ-iff-W-or-special}

A nondegenerate Legendre immersion
$f : S \to \Lambda$ is generalized $L$-minimal if and only if
it is $L$-minimal,
in which case the quadratic differential $\P_F$ vanishes on $S$,
or it is $L$-isothermic
with Blaschke potential
$\Phi =e^u$
satisfying the second order partial differential equation
\begin{equation}\label{special-eq}
 \Delta u  = ce^{-2u}, 
  \end{equation}
where $c$ is a real constant.
\end{prop}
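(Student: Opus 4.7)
The plan is to split the biconditional into two cases according to whether the quadratic differential $\P_f$ vanishes identically on $S$. Since $\P_f=(p_1+p_3)\varphi^2$, the condition $\P_f\equiv 0$ is precisely $p_1+p_3\equiv 0$, which characterizes $L$-minimality by the theorem opening Section \ref{s:L-minimal}. For such $f$, $\mathbf{H}=0$ is trivially parallel, so Lemma \ref{prop:hol-cond}(2) already gives the holomorphicity of $\Q_f$; this handles both directions of the $L$-minimal alternative, together with the side statement that $\P_f=0$ in that case. Hence the substantive part of the proof concerns $\P_f\not\equiv 0$.

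\textbf{The $L$-isothermic case.} Assume $\Q_f$ is holomorphic and $\P_f\not\equiv 0$. By Lemma \ref{prop:hol-cond}(4), $f$ is $L$-isothermic, so $p_2=0$. I would pass to local isothermic curvature line coordinates $z=x+iy$, in which $\alpha^2_0=e^u\,dx$, $\alpha^3_0=e^u\,dy$ and $\varphi=e^u\,dz$ is of type $(1,0)$. The formulas \eqref{iso1}--\eqref{iso2} then yield $q_1=-e^{-u}u_y$, $q_2=e^{-u}u_x$ and $p_1-p_3=-e^{-2u}\Delta u$, so that
\[
 Q=\tfrac{1}{2}(p_1-p_3)=-\tfrac{1}{2}e^{-2u}\Delta u \quad\text{is real-valued,}
\]
and $q_2\alpha^2_0-q_1\alpha^3_0=u_x\,dx+u_y\,dy=du$. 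Feeding these identities into the holomorphicity criterion of Lemma \ref{prop:hol-cond}(1), $dQ\wedge\varphi=-4(q_2\alpha^2_0-q_1\alpha^3_0)Q\wedge\varphi$, it becomes $d(Qe^{4u})\wedge\varphi=0$; since $\varphi$ is of type $(1,0)$, this says exactly $\bar\partial(Qe^{4u})=0$, i.e., $Qe^{4u}$ is holomorphic in $z$. Being real-valued as well, $Qe^{4u}$ must be a real constant, which I write as $-c/2$; substituting back the explicit expression for $Q$ yields $e^{2u}\Delta u=c$, that is, $\Delta u=ce^{-2u}$.

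\textbf{Converse and main obstacle.} The converse implications follow by reading the same argument backwards: $L$-minimality gives $\Q_f$ holomorphic by Lemma \ref{prop:hol-cond}(2) as noted above, and an $L$-isothermic immersion whose potential satisfies $\Delta u=ce^{-2u}$ has $Qe^{4u}=-c/2$ constant, so Lemma \ref{prop:hol-cond}(1) again gives the holomorphicity of $\Q_f$. The main subtlety is the $(1,0)$ bookkeeping in the middle step: translating the intrinsic holomorphicity condition of Lemma \ref{prop:hol-cond}(1) into the complex-analytic statement $\bar\partial(Qe^{4u})=0$ hinges on the identities $q_2\alpha^2_0-q_1\alpha^3_0=du$ and $\varphi=e^u\,dz$, which are valid only in $L$-isothermic curvature line coordinates; once these are in place, the argument reduces to the elementary observation that a real-valued holomorphic function on a connected planar domain is constant.
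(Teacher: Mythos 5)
Your proof is correct and follows essentially the same route as the paper: split on whether $\P_f$ vanishes identically, invoke Lemma~\ref{prop:hol-cond} to get $L$-isothermicity when it does not, and observe in isothermic curvature-line coordinates that the real coefficient $Qe^{4u}$ of $(dz)^4$ is a holomorphic, hence constant, function. The only (minor) divergence is in the converse of the isothermic case, where the paper routes through \eqref{dW} and the parallel-mean-curvature condition \eqref{gLm-eq} while you simply reverse the forward computation via Lemma~\ref{prop:hol-cond}(1); both are valid, and yours is slightly more direct.
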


\begin{proof}
If $\Q_f$ is holomorphic and the holomorphic quartic differential $\P_f$
vanishes, then $f$ is $L$-minimal.
If instead $\P_f$ is nowhere vanishing, then
$f$ is $L$-isothermic by Lemma \ref{prop:hol-cond}.
Let $z = x+iy : U\subset S  \to \C$ be an isothermic
chart, so
that the canonical coframing
$(\alpha^2_0, \alpha^3_0)$ takes the form
$\alpha^2_0= e^udx$ and $\alpha^3_0 = e^udy$,
where $\Phi =e^u$ is the Blaschke potential  (cf. Section \ref{s:L-isotermic}).

 From \eqref{iso1} and \eqref{iso2} we get
\[
 \Q_f= \frac{1}{2}(p_1 - p_3) \omega^4 =  \text{\sc j}e^{4u} (dz)^4
=-\frac{1}{2}(e^{-2u} \Delta u) e^{4u}(dz)^4.
   \]
Since $\Q_f$ is holomorphic,
\[
(e^{-2u} \Delta u) e^{4u} = c,
  \]
for a constant $c\in \R$, that is
\[
 \Delta u  = ce^{-2u}.
  \]

Conversely, if we assume that $\Phi = e^{u}$ satisfy the equation
\eqref{special-eq}, then
the right hand side of \eqref{dW} vanishes identically, which implies
that $p_1 + p_3 = k e^{-2u}$, for a constant $k\in \R$.
A direct computation shows that $p_1 + p_3 = k e^{-2u}$ satisfies
the equation
\[
 d(p_1+p_3) + 2(p_1+p_3)(q_2\alpha^2_0 -q_1 \alpha^3_0) = 0.
	\]
This expresses the fact that the $L$-Gauss map
of $f$, $\sigma_f = [A_0]$, has
parallel mean curvature vector, or equivalently,
that the quartic differential $\Q_f$ is holomorphic.
\end{proof}

\subsection{Special $L$-isothermic surfaces and $T$-transforms}\label{ss:s-L-iso}

\begin{defn}
A nondegenerate $L$-isothermic immersion $f : S \to \La$
is called {\it special} if its Blaschke potential $\Phi = e^u$
satisfies the second order PDE \eqref{special-eq}, i.e.,
\[
 \Delta u  = ce^{-2u}, \quad c\in \R.
  \]
The constant $c$ is called the {\it character} of the special $L$-isothermic
surface $f$.
\end{defn}

\begin{ex}[$L$-minimal isothermic surfaces]
Since $L$-minimal surfaces are characterized
by the condition $p_1 +p_3 = 0$, if a
nondegenerate $L$-isothermic immersion $f :S \to \La$ is also $L$-minimal,
i.e., $p_2 =0$, then
the right hand side of \eqref{dW} is identically zero. This implies
that
$d\left(e^{2u} \Delta u\right)= 0$,
and hence the following.

\begin{prop}\label{prop:L-min-iso-are-special}
 Any nondegenerate $L$-mi\-ni\-mal
isothermic immersion $f : S \to \La$ is special $L$-isothermic.
\end{prop}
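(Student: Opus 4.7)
The plan is to derive the special $L$-isothermic equation directly from the structural equation \eqref{dW}, using the $L$-minimality condition to annihilate its left-hand side.

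First I would fix local isothermic curvature line coordinates $z = x+iy$ on $S$ guaranteed by the $L$-isothermic hypothesis, so that $\alpha^2_0 = e^u dx$, $\alpha^3_0 = e^u dy$ with Blaschke potential $\Phi = e^u$, and equation \eqref{dW} is available. Since $f$ is $L$-minimal, $p_1 + p_3 \equiv 0$, so $d(e^{2u}(p_1+p_3)) = 0$. Hence the 1-form on the right-hand side of \eqref{dW} vanishes identically, giving the pair of scalar equations
\[
 (e^{-2u}\Delta u)_x + 4u_x (e^{-2u}\Delta u) = 0, \qquad
 (e^{-2u}\Delta u)_y + 4u_y (e^{-2u}\Delta u) = 0.
\]

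The key routine identity to verify is that each of these expressions equals $e^{-4u}$ times a derivative of $e^{2u}\Delta u$. Indeed,
\[
 (e^{-2u}\Delta u)_x + 4u_x (e^{-2u}\Delta u)
 = e^{-2u}\bigl((\Delta u)_x + 2u_x \Delta u\bigr)
 = e^{-4u}\,(e^{2u}\Delta u)_x,
\]
and analogously in $y$. Consequently $d(e^{2u}\Delta u) = 0$, so $e^{2u}\Delta u = c$ for some real constant $c$, i.e. $\Delta u = c\,e^{-2u}$, which is precisely the condition \eqref{special-eq} defining a special $L$-isothermic surface of character $c$.

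I expect no real obstacle here: the entire argument is algebraic manipulation of \eqref{dW} together with the $L$-minimality relation $p_1 + p_3 = 0$. The only point requiring a small computation is the simplification of the RHS of \eqref{dW} into the exact form $-e^{-2u}\,d(e^{2u}\Delta u)$; once this is observed, the conclusion is immediate. There is no global issue since $c$ is determined on each connected isothermic chart and agreement of $c$ on overlaps follows from continuity of $e^{2u}\Delta u$.
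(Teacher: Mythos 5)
Your argument is correct and is essentially the paper's own proof: $L$-minimality forces $p_1+p_3\equiv 0$, so the left-hand side of \eqref{dW} vanishes, the right-hand side then yields $d\bigl(e^{2u}\Delta u\bigr)=0$, and hence $\Delta u = c\,e^{-2u}$ as in \eqref{special-eq}. One cosmetic remark: the right-hand side of \eqref{dW} simplifies to $-e^{-2u}\bigl((e^{2u}\Delta u)_x\,dx-(e^{2u}\Delta u)_y\,dy\bigr)$ rather than exactly $-e^{-2u}\,d\bigl(e^{2u}\Delta u\bigr)$ (note the relative sign of the $dy$ term), but this does not affect the conclusion, since your displayed pair of scalar equations already shows that both partial derivatives of $e^{2u}\Delta u$ vanish.
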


Other examples of $L$-minimal isothermic surfaces include
$L$-minimal canal surfaces \cite{MN-REND-RM, MN-AMB}.

\end{ex}


Let $f : S \to \La$ be a special $L$-isothermic immersion. From
the proof of
Proposition \ref{thm:hQ-iff-W-or-special}, the invariants $\text{\sc j}$ and
$\text{\sc w}$ of $f$ are given by
\begin{equation}\label{special-L-J}
   \text{\sc w} = ke^{-2u}, \quad
   \text{\sc j} = -\frac{1}{2}e^{-2u}\Delta u,
   \end{equation}
where $k$ is a real constant.

\begin{defn}
The constant $k$ will be referred to as the
{\it deformation} (or {\it spectral}) {\it parameter} of the special
$L$-isothermic immersion $F$.
\end{defn}

We have the following.

\begin{prop}\label{prop:special-as-Ttrans}
Any special $L$-isothermic immersion is the $T$-transform of
an $L$-minimal isothermic immersion.
\end{prop}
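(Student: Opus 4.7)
The plan is to locate, within the spectral $T$-family associated to the Blaschke potential of $f$, the $L$-minimal isothermic representative.

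First, I would exploit the defining equation $\Delta u = c e^{-2u}$ of a special $L$-isothermic immersion with character $c$. Substituting this identity into the expression \eqref{dW} and computing the partial derivatives of $e^{-2u}\Delta u = ce^{-4u}$, the right-hand side of \eqref{dW} collapses to zero identically: the $dx$-coefficient becomes $-e^{2u}\bigl[-4ce^{-4u}u_x + 4u_x(ce^{-4u})\bigr] = 0$, and analogously for $dy$. This yields two facts at once: $\Phi = e^u$ satisfies the Blaschke equation \eqref{blaschke-eq} (so the spectral-deformation construction applies), and $e^{2u}(p_1+p_3)$ is a real constant, say $2k$, so that $\text{\sc w} = ke^{-2u}$ in agreement with \eqref{special-L-J}.

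Next, I would invoke the spectral-family construction reviewed in Section \ref{s:L-isotermic}: starting from $f = f_0$, the Blaschke potential $\Phi = e^u$ generates a 1-parameter family $\{f_m\}_{m\in\R}$ of pairwise non-Laguerre-equivalent $L$-isothermic immersions all sharing $\Phi$, whose invariants are given by $\text{\sc w}_m = (k+m)e^{-2u}$ and $\text{\sc j}_m = \text{\sc j}$.

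The final step is to select the parameter value $m = -k$. The resulting immersion $f_{-k}$ has $\text{\sc w}_{-k} \equiv 0$, i.e., $p_1+p_3 \equiv 0$, hence is $L$-minimal; being $L$-isothermic by construction, it is $L$-minimal isothermic. Since $f = f_0$ and $f_{-k}$ share the Blaschke potential and, for $k \neq 0$, are not Laguerre equivalent (their $\text{\sc w}$-invariants differ), $f$ is a $T$-transform of $f_{-k}$ by definition. The case $k = 0$ is trivial, as $f$ itself is then $L$-minimal isothermic. I do not anticipate a real obstacle: the only computation demanding care is the cancellation in \eqref{dW}, while the rest follows by directly quoting the spectral-family construction already established in Section \ref{s:L-isotermic}.
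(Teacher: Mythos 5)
Your proof is correct and follows essentially the same route as the paper: both arguments rest on the vanishing of the right-hand side of \eqref{dW} under $\Delta u = ce^{-2u}$ (giving $\text{\sc w} = ke^{-2u}$) and on the spectral family $\{f_m\}$ from Section \ref{s:L-isotermic}. The only cosmetic difference is direction — you locate the $L$-minimal member by setting $m=-k$ inside the family of the given surface, whereas the paper first builds the $\text{\sc w}=0$ representative and identifies the given surface as its $T_k$-transform — and your explicit treatment of the degenerate case $k=0$ is a small improvement in care.
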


\begin{proof}
According to Section \ref{s:L-isotermic}, there exists, up to Laguerre equivalence,
a unique $L$-isothermic immersion $f$ with
Blaschke potential $\Phi = e^u$ satisfying \eqref{special-eq}
and with invariant functions
\[
   \text{\sc w} = 0, \quad
   \text{\sc j} = -\frac{1}{2}e^{-2u}\Delta u = -\frac{c}{2}e^{-4u}.
    \]
Since $\text{\sc w} = 0$, we have that $f$ is $L$-minimal.
Next, let $\tilde f$ be a special $L$-isothermic immersion with the same Blaschke
potential $\Phi$ as $f$
and with deformation parameter $k$. The discussion in Section \ref{s:L-isotermic}
implies that $\tilde f$ is a $T_m$-transform of $f$. The invariants of $\tilde f$
are then given by
\begin{equation}\label{special-L-J-m}
   \text{\sc w}_m = me^{-2u}, \quad
   \text{\sc j}_m = \text{\sc j}= -\frac{c}{2}e^{-4u}.
  \end{equation}
From \eqref{special-L-J} and \eqref{special-L-J-m}, it follows that $m=k$.
\end{proof}

From Proposition \ref{thm:hQ-iff-W-or-special},
Proposition \ref{prop:L-min-iso-are-special}, and
Proposition \ref{prop:special-as-Ttrans}, we have the following.

\vskip0.3cm

\begin{thm}[\cite{MN-MZ}]
A nondegenerate Legendre immersion $f : M \to  \La$ is generalized $L$-minimal
if and only if the immersion $f$ is $L$-minimal, in which case $\P_f$ vanishes,
or is locally the $T$-transform of an $L$-minimal isothermic surface.

\end{thm}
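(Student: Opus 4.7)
The plan is to assemble the three preceding propositions into a two-way implication. Everything has essentially been set up in Section \ref{s:gen-L-min}, so the strategy is to carefully trace the logical chain rather than develop new machinery.

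First I would handle the forward direction. Assume $f:S\to\La$ is generalized $L$-minimal, meaning $\Q_f$ is holomorphic. By Proposition \ref{thm:hQ-iff-W-or-special}, exactly one of two alternatives holds: either $f$ is $L$-minimal (in which case $\P_f\equiv 0$, giving the first clause of the theorem directly), or $f$ is $L$-isothermic with Blaschke potential $\Phi=e^u$ satisfying $\Delta u=ce^{-2u}$ for some constant $c\in\R$. In the latter case, $f$ is by definition special $L$-isothermic, and Proposition \ref{prop:special-as-Ttrans} produces an $L$-minimal isothermic immersion $f_0$ with the same Blaschke potential $\Phi$ such that $f$ is (locally, up to Laguerre equivalence) the $T$-transform of $f_0$. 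This yields the second clause.

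For the converse direction I would argue as follows. If $f$ is $L$-minimal, then the canonical frame invariant $p_1+p_3$ vanishes, so $\P_f=0$; consulting equations \eqref{se3}, \eqref{se4} one sees that $Q=\frac12(p_1-p_3)-ip_2$ satisfies the holomorphicity condition \eqref{cns-holQ} of Lemma \ref{prop:hol-cond}, so $\Q_f$ is holomorphic and $f$ is generalized $L$-minimal. If instead $f$ is (locally) the $T$-transform of an $L$-minimal isothermic immersion $f_0$, then by Proposition \ref{prop:L-min-iso-are-special}, $f_0$ is special $L$-isothermic, hence its Blaschke potential $\Phi=e^u$ satisfies $\Delta u=ce^{-2u}$. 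Since a $T$-transform preserves the Blaschke potential, $f$ itself is $L$-isothermic with Blaschke potential satisfying the same equation \eqref{special-eq}; applying the ``if'' direction of Proposition \ref{thm:hQ-iff-W-or-special} then shows that $f$ is generalized $L$-minimal.

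The proof is essentially bookkeeping, and the main subtlety, rather than an obstacle, is the precise meaning of the word ``locally'' in the statement. The $T$-transform is a priori defined only over simply connected isothermic coordinate patches, and the matching of $f$ with the $T_m$-transform of $f_0$ in Proposition \ref{prop:special-as-Ttrans} is given only up to Laguerre equivalence. I would therefore emphasize that the ``if and only if'' is understood on sufficiently small neighborhoods of points where curvature-line isothermic coordinates exist, so that the identifications $\text{\sc w}=ke^{-2u}$ and $\text{\sc j}=-\tfrac{1}{2}e^{-2u}\Delta u$ used in the matching of spectral parameters $m=k$ are well-defined on the chart.
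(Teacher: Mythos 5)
Your proposal is correct and follows essentially the same route as the paper, which derives the theorem directly by combining Proposition \ref{thm:hQ-iff-W-or-special}, Proposition \ref{prop:L-min-iso-are-special}, and Proposition \ref{prop:special-as-Ttrans}; your write-up simply makes the bookkeeping explicit. (A marginally cleaner way to see that $L$-minimal implies $\Q_f$ holomorphic in your converse is via Lemma \ref{prop:hol-cond}(2): $p_1+p_3=0$ gives $\mathbf H=0$, which is trivially parallel.)
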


\vskip0.3cm
In particular, if $f$ has holomorphic $\Q_f$ and zero $\P_f$, then
$f$ is $L$-isothermic if and only if it is $L$-minimal isothermic.

$L$-minimal isothermic surfaces
and their $T$-transforms (i.e., special $L$-isothermic
surfaces with non-zero deformation parameter) can be characterized in terms of the differential geometry
of their Laguerre Gauss maps. The following result was proved in \cite{MN-MZ}.

\begin{thm}[\cite{MN-MZ}]
Let $f : S \to  \La$ be a nondegenerate Legendre immersion.
Then:

\begin{enumerate}

\item
$f$ is $L$-minimal and $L$-isothermic
if and only if
its Laguerre Gauss map $\sigma_f : S \to \R^{3,1}\cong \mathcal Q_\Sigma$
has zero mean curvature
in some
spacelike, timelike, or (degenerate) isotropic hyperplane of $\R^{3,1}$.

\item $f$ is generalized $L$-minimal
and non-zero $\P_F$
if and only if
its Laguerre Gauss map $\sigma_f : S \to \R^{3,1}$
has constant mean curvature $H=r$
in some translate of hyperbolic 3-space $\mathbb H^3(-r^2)\subset \R^{3,1}$,
de Sitter 3-space $\mathbb S^3_1(r^2)\subset \R^{3,1}$,
or has zero mean curvature in some translate of a time-oriented lightcone
$\mathcal L^3_\pm\subset \R^{3,1}$.

\end{enumerate}

In addition, if the Laguerre Gauss map of $f$
takes values in a spacelike (respectively, timelike, isotropic) hyperplane, then the
Laguerre Gauss maps of the $T$-transforms of $f$ take values
in a translate of a hyperbolic 3-space (respectively, de Sitter 3-space,
time-oriented lightcone).
In particular, the signature of the 3-space where the Laguerre Gauss map takes
values remains unchanged under $T$-transformations.
\end{thm}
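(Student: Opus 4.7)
The plan is to translate each geometric ambient — a hyperplane, a translate of $\mathbb H^3(-r^2)$, a translate of $\mathbb S^3_1(r^2)$, or a time-oriented lightcone in $\mathbb R^{3,1}$ — into the existence of a constant vector $\mathfrak b\in\mathbb R^{4,2}$ satisfying $\langle A_0,\mathfrak b\rangle\equiv 0$, and then use the canonical frame structure equations to detect when such an $\mathfrak b$ exists. The key algebraic identity is $\langle\mathfrak a(V),\mathfrak a(W)\rangle=-\tfrac{1}{2}(V-W,V-W)$, from which the projective hyperplane $\{\langle\cdot,\mathfrak a(c)-\tfrac{\kappa}{2}\boldsymbol\epsilon_5\rangle=0\}\subset\mathbb{RP}^5$ pulls back under $V\mapsto[\mathfrak a(V)]$ to the affine quadric $\{V:(V-c,V-c)=\kappa\}$ in $\mathbb R^{3,1}$. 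The ambient is a hyperplane exactly when $\langle\mathfrak b,\boldsymbol\epsilon_5\rangle=0$, and its signature is encoded in $\langle\mathfrak b,\mathfrak b\rangle=\kappa$ (negative for $\mathbb H^3(-r^2)$ and for a spacelike hyperplane with timelike normal, positive for $\mathbb S^3_1(r^2)$ and for a timelike hyperplane, zero for the lightcone and for an isotropic hyperplane).

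The core computation is to expand $\mathfrak b=a^j A_j$ in the canonical frame and impose $d\mathfrak b=0$. The condition $\langle A_0,\mathfrak b\rangle=0$ forces $a^5=0$, and $da^5=0$ combined with the independence of $\alpha^2_0,\alpha^3_0$ forces $a^2=a^3=0$. Computing $da^2,da^3$ from the canonical structure equations and from $\alpha^1_2=p_1\alpha^2_0+p_2\alpha^3_0$, $\alpha^1_3=p_2\alpha^2_0+p_3\alpha^3_0$ produces the algebraic compatibilities $p_2\,a^4=0$, $2a^0=-(p_1+p_3)a^4$, $2a^1=(p_3-p_1)a^4$. Provided $a^4\not\equiv 0$ (the only case giving a non-trivial $\mathfrak b$), this forces $p_2=0$, so $f$ is $L$-isothermic, and determines $a^0,a^1$ from $a^4$ and the invariants $\text{\sc j},\text{\sc w}$. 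The remaining equation $da^4=\alpha^1_1\, a^4$ is integrable because $d\alpha^1_1=2p_2\,\alpha^2_0\wedge\alpha^3_0=0$.

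For part~(1), the hyperplane condition $a^0=0$ yields $(p_1+p_3)a^4=0$, hence $p_1+p_3=0$, i.e.\ $f$ is $L$-minimal; conversely, any $L$-minimal isothermic $f$ yields a constant $\mathfrak b_0=a^4(A_4-p_1A_1)$ with $\langle\mathfrak b_0,\mathfrak b_0\rangle=2p_1(a^4)^2$, whose sign (the sign of $\text{\sc j}$) fixes the type of hyperplane, and since a hyperplane is totally geodesic in $\mathbb R^{3,1}$, zero mean curvature within it is equivalent to the ambient $\mathbf H=\tfrac12(p_1+p_3)A_1$ vanishing. For part~(2), working in isothermic coordinates $\alpha^2_0=e^udx,\alpha^3_0=e^udy$ one obtains $\alpha^1_1=2du$, whence $a^4=Ce^{2u}$; the constancy of $a^0=-\text{\sc w}\,Ce^{2u}$ forces $\text{\sc w}=me^{-2u}$ for a real constant $m$, while $da^1=-\alpha^1_1a^1$ forces $\text{\sc j}=-\tfrac{c}{2}e^{-4u}$ for a real constant $c$. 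These are precisely the invariants of a special $L$-isothermic surface, so by Propositions~\ref{thm:hQ-iff-W-or-special}--\ref{prop:special-as-Ttrans}, $f$ is a generalized $L$-minimal $T$-transform with $\mathcal P_f=2me^{-2u}\not\equiv 0$. Conversely, for such $f$ one writes down explicitly $\mathfrak b_m=-mCA_0+\tfrac{cC}{2}e^{-2u}A_1+Ce^{2u}A_4$, constant and satisfying $\langle\mathfrak b_m,\mathfrak b_m\rangle=-cC^2$; this equals $\langle\mathfrak b_0,\mathfrak b_0\rangle=2\text{\sc j}(a^4)^2$ of the underlying $L$-minimal isothermic partner $f_0$, which gives at once the signature-preservation assertion. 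Finally, the CMC value $|H|=r=\sqrt{|\kappa|}$ is extracted by decomposing the parallel null $\mathbf H_m=me^{-2u}A_1$ into components tangent and normal to the ambient space form, yielding $\langle\mathbf H_m,\mathbf H_m\rangle=H^2-r^2=0$.

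The main obstacle is the recognition step embedded in part~(2): the algebraic identities $\text{\sc w}=me^{-2u}$ and $\text{\sc j}=-\tfrac{c}{2}e^{-4u}$ that fall out of the ODE compatibility must be identified as precisely the invariants of special $L$-isothermic surfaces in the sense of Section~\ref{ss:s-L-iso}. The sign/convention bookkeeping in the $\mathbb R^{4,2}\leftrightarrow\mathbb R^{3,1}$ dictionary is also delicate, in particular the fact that a spacelike hyperplane has \emph{timelike} normal and so negative $\langle\mathfrak b,\mathfrak b\rangle$. Once these identifications are secured, Propositions~\ref{thm:hQ-iff-W-or-special} and \ref{prop:special-as-Ttrans} supply the geometric conclusion, and signature preservation is immediate from the equality of $\langle\mathfrak b_0,\mathfrak b_0\rangle$ and $\langle\mathfrak b_m,\mathfrak b_m\rangle$.
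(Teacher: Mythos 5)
The paper itself does not prove this theorem---it only cites \cite{MN-MZ}---so your argument can only be checked against the machinery the survey sets up, and against that it holds together. Your central device (encoding ``$\sigma_f$ lies in a hyperplane or in a translate of $\mathbb H^3$, $\mathbb S^3_1$, $\mathcal L^3_\pm$'' as the existence of a constant $\mathfrak b\in\R^{4,2}$ with $\langle A_0,\mathfrak b\rangle\equiv 0$, via $\langle\mathfrak a(V),\mathfrak a(W)\rangle=-\tfrac12(V-W,V-W)$) is the natural one for the quadric model, and your frame computation is correct: with $\mathfrak b=a^jA_j$ the equations \eqref{dA=Aalpha} and \eqref{alfa12-13} do give $a^5=a^2=a^3=0$, $p_2a^4=0$, $2a^0=-(p_1+p_3)a^4$, $2a^1=(p_3-p_1)a^4$, $da^4=a^4\alpha^1_1$, $da^1=-a^1\alpha^1_1$, with $d\alpha^1_1=2p_2\,\alpha^2_0\wedge\alpha^3_0$ supplying integrability; the hyperplane case $a^0=0$ then forces $p_1+p_3=0$, and the quadric case forces $\text{\sc w}=me^{-2u}$, $\text{\sc j}=-\tfrac{c}{2}e^{-4u}$, which are exactly \eqref{special-L-J}, so Propositions \ref{thm:hQ-iff-W-or-special}--\ref{prop:special-as-Ttrans} apply as you say, and $\langle\mathfrak b_m,\mathfrak b_m\rangle=-cC^2$ independent of $m$ gives the signature preservation. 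Two points deserve more care in a full write-up: (i) the extraction of $|H|=r$ from $\langle\mathbf H,\mathbf H\rangle=0$ requires splitting $\mathbf H$ along the (spacelike or timelike) unit normal of the umbilic quadric, and the sign of the tangential term differs between the $\mathbb H^3$ and $\mathbb S^3_1$ cases (and the lightcone case has no unit normal at all), so ``$H^2-r^2=0$'' should be argued case by case; (ii) in the degenerate (isotropic) cases of both parts, ``zero mean curvature in the hyperplane/lightcone'' needs a stated convention, since the induced ambient metric is degenerate---though, as your computation in fact shows, mere containment already forces the relevant condition on $p_1+p_3$, so the equivalences survive under any reasonable convention.
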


\begin{remark}
As an application of these results, one can show (cf. \cite{MN-MZ}) that
the Lawson correspondence \cite{Lawson} between
certain isometric CMC surfaces in different hyperbolic 3-spaces
and, in particular,
the Umehara--Yamada isometric perturbation \cite{UY-Crelle}
of minimal surfaces of $\R^3$ into CMC surfaces in hyperbolic 3-space,
can be viewed as a special case of the $T$-transformation of $L$-isothermic surfaces
with holomorphic quartic differential.
(For a M\"obius geometric interpretation of the Umehara--Yamada
perturbation see \cite{HMN, MN-HOUSTON}).
This interpretation also applies
to the generalizations of Lawson's correspondence in the Lorentzian
\cite{Pa1990} and the (degenerate) isotropic situations,
namely to the perturbation of maximal surfaces in Minkowski
3-space into CMC spacelike surfaces in de Sitter 3-space
\cite{AiAk, AGM, Kob, Lee2005}, and that of zero mean curvature
spacelike surfaces in a (degenerate) isotropic 3-space into zero mean curvature spacelike
surfaces in a time-oriented lightcone of $\R^{3,1}$.
\end{remark}


\bibliographystyle{amsalpha}

\end{document}